\newtheorem{theorem}{Theorem}
\newtheorem{lemma}[theorem]{Lemma}
\newtheorem{corollary}[theorem]{Corollary}
\theoremstyle{definition}
\newtheorem{remark}[theorem]{Remark}
\numberwithin{equation}{section}
\begin{document}

\title{Embedding theorems for  solvable groups}

\author{Vitaly Roman'kov
}

\maketitle
\footnote{The work is supported by Mathematical Center in Akademgorodok under agreement No. 075-15-2019-1613 with the Ministry of Science and Higher Education of the Russian Federation.}
\begin{abstract}
In this paper, we prove a series of results on group embeddings in groups with a small number of generators.  We show that each finitely generated group $G$  lying in a variety ${\mathcal M}$ can be embedded in a $4$-generated group $H \in {\mathcal M}{\mathcal A}$ (${\mathcal A}$ means the variety of abelian groups).  If $G$ is a finite  group, then $H$ can also be found as a finite   group. 
It follows,  that any finitely generated (finite) solvable group $G$ of the derived length $l$ can be embedded in a $4$-generated (finite) solvable group $H$ of length $l+1$. Thus, we  answer  the question of V. H. Mikaelian and A.Yu. Olshanskii. It is also shown that any countable group  $G\in {\mathcal M}$,  such that the  abelianization $G_{ab}$ is a free abelian group,  is embeddable in a  $2$-generated group $H\in {\mathcal M}{\mathcal A}$.  
\end{abstract}
{\bf Key words. Solvable group, embedding, variety}

{\bf 2020 Mathematical Subject Classification. 20F16, 20E22}


\section{Introduction}

The main aim of this paper is to study embeddings of finitely generated groups in $2$- or $4$-generated groups. Let $G$ stand for a finitely generated group which lies in a variety ${\mathcal M}$, and $H$ for a $4$-generated group in which $G$ can be embedded. We show that $H$ can be found in the variety ${\mathcal M}{\mathcal A}$, where ${\mathcal A}$ denote the variety of abelian groups. It follows that every finitely generated  solvable group of the derived length $l$ can be embedded in a $4$-generated solvable group of length $l+1$. 

We also study what further properties of $G$ our embedding procedure endow $H.$ Finiteness is one of them, and if $G$ is a finite $p$-group ($p$ is a prime) , $H$ can be chosen as a finite $p$-group. Also, if $G$ has finite exponent, then $H$ can be made to have finite exponent. If $G$ is a counable group such that the  abelianization $G_{ab}$ is a free abelian group then $H$ can be found as a $2$-generated group. 

Thus, we refine the classical results on embeddings of countable groups, a brief overview of which is given below. These results refer to embeddings in $2$-generated groups. We do not know whether parameter $4$ can be lowered in our results. We  believe that this cannot be done.

In the late 1940s, G. Higman, B. H. Neumann and H. Neumann showed that every countable group embeds in a $2$-generator group, in the same paper \cite{HNN} in which they introduced and succesifully applied HNN-extensions. Their method using free constructions of groups did not give similar results for varieties of groups. So then  B. H. Neumann and H. Neumann in \cite{NN} applied wreath products to   prove that every countable group $G$ lying in a variety ${\mathcal M}$ can be embedded in a $2$-generated group $H \in {\mathcal M}{\mathcal A}^2.$ If $G$ is finite ($p$-) group, then $H$ can be chosen as finite ($p$-) group.  Also, if $G$ has finite exponent, then $H$ can be made to have finite exponent. 

It follows that every countable solvable group of the derived length $l$ can be embedded  in some $2$-generated solvable group of length $l+2.$ Note that this bound is sharp. Namely, the group $\mathbb{Q}$ of rationals does not embed into any finitely generated metabelian group $M.$ Indeed, $M$ is residually finite by   Hall$'$s theorem proved in \cite{Hall}, but $\mathbb{Q}$ is not.  Thus we cannot lower $l+2$ to $l+1$ in the Neumann-Neumann embedding theorem.

V.H. Mikaelian and A. Yu. Olshanskii   gave an explicit classification of all abelian groups that can occur as subgroups of finitely generated metabelian groups as follows.

{\bf Theorem} (V. H. Mikaelian, A. Yu. Olshanskii \cite{MO}).
Let $A$ be an abelian group. The following properties are equivalent.
\begin{enumerate}
\item $A$ is a subgroup of a finitely generated metabelian group;
\item  $A$ is a subgroup of a finitely generated abelian-by-polycyclic group;
\item  $A$ is a subgroup of a finitely presented metabelian group;
\item $A$ is a subgroup of a $2$-generated metabelian group;
\item $A$ is a Hall group.
\end{enumerate}

Note that (3) follows from  a remarkable statement independently proved by G. Baumslag and V.N. Remeslennikov: each finitely generated metabelian group embeds in some finitely presented metabelian group (see \cite{Romess}).
 
By definition, $A$ is a {\it Hall group} if
\begin{itemize}
\item
 $A$ is a (finite or) countable abelian group;
\item $A = T \oplus K,$ where $T$ is a bounded torsion group (i.e., the orders of all
elements in $T$ are bounded), $K$ is torsion free;
\item $K$ has a free abelian subgroup $F$ such that $K/$F is a torsion group with
trivial $p$-subgroups for all primes except for the members of a finite set defined by $K.$
\end{itemize} 

In \cite{O}, A. Yu. Olshanskii established a number of other embedding theorems for metabelian groups. 

Every finitely generated nilpotent group satisfies  the maximal  condition on subgroups, that is, every subgroup is finitely generated (see \cite{Hall2} or \cite{Romess}). Hence each non-finitely generated  nilpotent group cannot be embedded in a finitely generated nilpotent group. However  every finitely generated nilpotent group  embeds in some $2$-generated nilpotent group of sufficiently large class \cite{Romnilp}. Similarly, every polycyclic group  embeds in a $2$-generated polycyclic group \cite{Rompol}. 

Let $G$ be a group and $g,f\in G.$ Further in the paper $g^f$ denotes $f^{-1}gf$ (conjugate of $g$ by $f$) and $[g,f]$ stands for $g^{-1}f^{-1}gf$ (commutator of $g$ and $f$). Also  $[g, f; 1] $  means $[g,f]$ and inductively $[g,f; k+1]$ stands for $[[g,f;k],f], k = 1, 2, ... .$
 By $G'$we denote  the derived subgroup of $G$. Then $G_{ab}=G/G'$ is the abelianization of $G.$ $\mathbb{Z}$ means the infinite cyclic group and $\mathbb{Z}_n$ denotes a cyclic group of order $n.$

Recall that the Cartesian wreath product of groups is defined as follows. 
Let $A$ and $B$ be groups and $D$ a group of all functions $f : B \rightarrow A$ with multiplication $(f_1f_2)(x) = f_1(x)f_2(x)$ for $x \in B.$  The group $B$ acts on $D$
from the left by shift automorphisms: $f^b(x) = f(bx)$ for all $f \in  D,$
$b, x \in B,$ and the associated with this action semidirect product $D \rtimes B$ is
called the {\it Cartesian  wreath product} of the groups $A$ and $B,$ denoted by
$A Wr B.$ The subgroup $D$ is called {\it base subgroup} of  $A\ Wr\ B$. Thus, every element of $A Wr B$ has a unique presentation as $bf$
$(b \in  B, f \in D)$ and the multiplication rule follows from the conjugation
formula 
\begin{equation}
\label{eq:1.1}
 f^b(x)=f(bx) 
\end{equation}
\noindent
in $A Wr B$ for any $b, x \in B$ and $f \in D.$ If instead of $D$
one takes the smaller group 
consisting of all functions with finite support, that is, functions taking only non-identity values on a finite set of points, then one obtains a subgroup of $A\ Wr\ B$
called the {\it wreath product} ({\it direct wreath product}); it is denoted by 
$A\ wr\ B$.

\section{Main results}

The following question was posed by V. H. Mikaelian and A. Yu. Olshanskii in \cite{MO} and  was also written by A. Yu. Olshanskii  in \cite{Kour}(Question 18.73): 

Does every finitely generated solvable group  of derived length $l \geq 2$ embed
into a $2$-generated solvable group  of length $l + 1${}? Or at least, into some $k$-generated
$(l + 1)$-solvable group, where $k = k(l)${}? 

We prove the following embedding theorems that imply an affirmative answer to  Mikaelian-Olshanskii$'$ question.    In the following statements, ${\mathcal M}$  means an arbitrary variety of groups and ${\mathcal A}$ is the variety of abelian groups. For $s\in \mathbb{N}$, ${\mathcal A}_s$ means the variety of abelian groups of exponent $s.$  
\begin{theorem}
\label{th:1}
Let  $G$ be a countable  group  such that the abelianization $G_{ab}$ is a free abelian group.  Then $G$  embeds in some   $2$-generated subgroup  $H$ of the Cartesian wreath product $G Wr \mathbb{Z}$. 
\end{theorem}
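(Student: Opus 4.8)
The plan is to copy $G$ into a subgroup of $W:=G\, Wr\,\mathbb{Z}=D\rtimes\langle t\rangle$ ($D=\prod_{i\in\mathbb{Z}}G$, with $f^{t^{n}}(x)=f(x+n)$ by \eqref{eq:1.1}) that is generated by two elements: one essentially the top generator $t$, and one element $a$ of the base group that ``spreads'' a generating set of $G$ along the coordinates. For any $a\in D$ the subgroup $H:=\langle a,t\rangle$ is $2$-generated, and since $D\lhd W$ with $W/D\cong\mathbb{Z}$ one has $H\cap D=N:=\langle a^{t^{n}}:n\in\mathbb{Z}\rangle$ and $H=N\rtimes\langle t\rangle$. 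So it suffices to choose $a$ with $G\hookrightarrow N$. I write $\delta_i(g)\in D$ for the point function with value $g$ at $i$ and trivial elsewhere.

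The hypothesis that $G_{ab}$ is free abelian will be used in two complementary ways. First, fixing a free basis $\{\bar g_{i}\}_{i\in I}$ of $G_{ab}$, lifts $g_{i}\in G$, and adjoining a countable generating set of $G'$, I obtain a countable generating set $(g_{j})_{j\ge 1}$ of $G$ over which I keep exact control of images in $G_{ab}$. Second, any coordinate functional $\phi\colon G\to\mathbb{Z}$ (we may assume $G_{ab}\neq 0$; the perfect case is handled separately and more easily) splits, $G=M\rtimes\langle s\rangle$ with $\phi(s)=1$, and then the element $s$ has infinite order, so $\langle\delta_{0}(s),t\rangle\cong\mathbb{Z}\, wr\,\mathbb{Z}$ sits inside $W$. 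This internal $\mathbb{Z}\, wr\,\mathbb{Z}$ provides an infinite supply of ``addresses'' (the $\delta_{n}(s)$) together with the conjugation moves transferring one address to another, playing exactly the role that the top group $\mathbb{Z}\, wr\,\mathbb{Z}$ plays in the Neumann--Neumann embedding into $\mathcal{M}\mathcal{A}^{2}$ recalled above. I then take $a\in D$ to combine such addressing markers built from $\delta_{0}(s)$ with a spread-out encoding $\prod_{j\ge 1}\delta_{j}(g_{j})$ of the generators, and define $\varphi\colon G\to N$ by sending each $g_{j}$ to a product of translates $a^{t^{n}}$ engineered so that, in the abelianization $\prod_{\mathbb{Z}}G_{ab}$, the image of $\varphi(g_{j})$ is the single basis vector $\bar g_{j}$ at one fixed coordinate, all other coordinates carrying only correction terms inside $\prod_{\mathbb{Z}}G'$.

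The hard part will be verifying that $\varphi$ is a well-defined monomorphism, i.e.\ that the elements $\varphi(g_{j})$ satisfy precisely the defining relations of $G$ and no others. Relations of $G$ that are visible in the abelianization hold automatically because $G_{ab}$ is free abelian: there is no torsion to force an accidental identity among the spread basis vectors, and no ``edge collapse'' at the ends of the support of $a$ — this is exactly the point that defeats the naïve attempt $\varphi(g_{j})=a^{t^{j}}$, whose image satisfies extra relations coming from overlaps and truncations of the translates. The remaining relations live in $G'$; descending to the corresponding correction terms leaves a problem of the same shape one derived length lower, and iterating — at each stage isolating the relevant commutators by the addressing moves available in $\langle\delta_{0}(s),t\rangle$ — one checks that no relation beyond those of $G$ is imposed. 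Injectivity of $\varphi$ then follows by projecting $N$ onto the coordinate carrying the distinguished basis vectors. Carrying out this bookkeeping — tracking the correction terms through the derived series of $G$ and confirming the absence of extra relations — is where essentially all the work of the proof lies.
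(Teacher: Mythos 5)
There is a genuine gap, and it sits exactly at the point you yourself set aside. The whole content of the theorem is the verification that the chosen two-generator subgroup contains a copy of $G$ with \emph{no extra relations}, and your sketch neither carries this out nor supplies a workable mechanism for it. The paper's proof rests on one concrete device: a strictly uneven sparse (Sidon-type) sequence $s_i=2^i$ and a single base-group function $d$ supported on the points $c^{s_i}$, whose values are lifts $a_i$ of a free basis of $G_{ab}$ together with the entries $u_{j,q},v_{j,q}$ of commutator expressions of generators of $G'$. Sparseness guarantees that the supports of two conjugates $d^{c^{s_i}}$, $d^{c^{s_j}}$ meet only at the identity coordinate, so commutators of such conjugates are point functions; this produces, inside $\langle c,d\rangle$, copies of the generators of $G'$ supported at a single coordinate, and then the evaluation-at-$1$ homomorphism is injective because freeness of $G_{ab}$ forces zero exponent sums on the $\tilde a_i$, which (again by sparseness) kills all off-identity values of a kernel element. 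Your proposed substitute for this device does not work: the subgroup $\langle \delta_0(s),t\rangle\cong\mathbb{Z}\,Wr\,\mathbb{Z}$ cannot ``address'' coordinates, because $\delta_n(s)$ lies in the base group and conjugation by it acts \emph{coordinatewise} (by conjugation by $s$ inside $G$); it never moves, isolates, or truncates supports — only powers of $t$ permute coordinates. So the analogy with the top group of the Neumann--Neumann construction (where the acting group really does shift the coordinates of the big base) breaks down, and no mechanism is given for producing point-supported elements or for controlling the ``correction terms'' you introduce. Since you explicitly state that this bookkeeping ``is where essentially all the work of the proof lies,'' what you have is a plan, not a proof.

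Moreover, the fallback you indicate for closing the gap — iterating ``one derived length lower'' through the derived series — cannot succeed in the stated generality: Theorem~\ref{th:1} assumes only that $G$ is countable with $G_{ab}$ free abelian, so $G$ need not be solvable and the derived series need not terminate (a free group of countable rank already satisfies the hypotheses), leaving your induction without a base case. Two smaller points: the perfect case ($G_{ab}=0$) is not disposable as ``easier'' without argument — in the paper it is handled by the same sparse-support commutator trick, which is precisely what you lack; and defining $\varphi\colon G\to N$ on generators and proving it ``well defined'' is logically the same task as proving that the evaluation map from $\langle$your generators$\rangle$ onto $G$ has trivial kernel, so phrasing it as well-definedness does not reduce the difficulty — it only hides where the support-disjointness argument must enter.
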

\begin{corollary}
\label{co:2}
 \begin{enumerate}
 \item Let $G \in {\mathcal M}$  be a countable group such that the abelianization $G_{ab}$ is a free abelian group.  Then $G$  embeds in some $2$-generated  group $H \in {\mathcal MA}.$ In particular, every finitely generated  group $G\in {\mathcal M}$  such that the abelianization $G_{ab}$ is torsion-free, embeds in some $2$-generated  group $H \in {\mathcal MA}.$
\item Let  $G$ be a countable solvable group of  derived length  $l$ such that the abelianization $G_{ab}$ is a free abelian group.  Then $G$  embeds in some $2$-generated solvable group $H$ of length $l+1.$ 
In particular, every finitely generated solvable group $G$ of  derived length $l$ such that the abelianization $G_{ab}$ is torsion-free, embeds in some $2$-generated solvable group $H$ of length $l+1.$
\item Every finitely generated   group $G \in {\mathcal M}$ has a subgroup $K$ of  finite index that   can be embedded in some $2$-generated group $H \in {\mathcal MA}.$ 
In particular, every finitely generated solvable group $G$
of  derived length  $l$ has a subgroup $K$ of  finite index that  can be embedded in some $2$-generated solvable group $H$ of length $l+1.$
\end{enumerate}
\end{corollary}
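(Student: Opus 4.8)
For (1) and (2), I would deduce them from Theorem~\ref{th:1} together with two closure properties of group varieties: closure under subgroups and under arbitrary (unrestricted) Cartesian products. Indeed $G\,Wr\,\mathbb{Z}=D\rtimes\mathbb{Z}$, where the base subgroup $D=\prod_{\mathbb{Z}}G$ is a Cartesian power of $G$; hence $G\in{\mathcal M}$ forces $D\in{\mathcal M}$, and since $\mathbb{Z}\in{\mathcal A}$ we get $G\,Wr\,\mathbb{Z}\in{\mathcal M}{\mathcal A}$, so the $2$-generated subgroup $H$ produced by Theorem~\ref{th:1} also lies in ${\mathcal M}{\mathcal A}$. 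This is (1). For (2) one takes ${\mathcal M}={\mathcal A}^{l}$, the variety of solvable groups of derived length at most $l$ (which contains $G$); then ${\mathcal M}{\mathcal A}={\mathcal A}^{l+1}$, so the resulting $H$ is solvable of derived length at most $l+1$, and since $G\le H$ its derived length is at least $l$. In both cases the ``in particular'' clause holds because a finitely generated group is countable and a finitely generated torsion-free abelian group is free abelian, so the hypothesis of Theorem~\ref{th:1} on $G_{ab}$ is satisfied.

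For (3) the plan is to reduce to (1) by passing to a suitable finite-index subgroup. Since $G$ is finitely generated, $G_{ab}$ is finitely generated abelian, say $G_{ab}=\mathbb{Z}^{r}\oplus T$ with $T$ finite. If $G$ can be generated by two elements one simply takes $K=H=G\in{\mathcal M}\subseteq{\mathcal M}{\mathcal A}$. Otherwise one looks for a finite-index subgroup $K\le G$ with $G'\le K$, $K/G'$ free abelian (e.g.\ the full preimage in $G$ of a free abelian finite-index subgroup of $G_{ab}$, possibly with its free generators lifted by elements of $G$ in a carefully chosen way), and, crucially, $K_{ab}$ torsion-free. From the split exact sequence $1\to G'/K'\to K_{ab}\to K/G'\to 1$ one gets $K_{ab}\cong (G'/K')\oplus (K/G')$, so the point is to arrange, using the commutator calculus in $G$, that the finitely generated abelian group $G'/K'$ is torsion-free; once this is done, applying (1) to $K$ (which belongs to ${\mathcal M}$ and is finitely generated) yields a $2$-generated $H\in{\mathcal M}{\mathcal A}$ containing $K$, and the solvable-length statement follows as in (2).

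The step I expect to be the real obstacle is producing $K$ with $K_{ab}$ torsion-free: there are finitely generated groups in which \emph{every} subgroup of finite index has torsion in its abelianization, such as the lamplighter group $\mathbb{Z}_{2}\,wr\,\mathbb{Z}$ (which, being $2$-generated, is nonetheless covered above) and more genuinely awkward examples like $(\mathbb{Z}_{2}\,wr\,\mathbb{Z})\times\mathbb{Z}_{2}$. In such cases the naive reduction fails, and one must invoke the paper's embedding of finitely generated ${\mathcal M}$-groups into $4$-generated groups of ${\mathcal M}{\mathcal A}$, together with a more careful bookkeeping of the abelian invariants of $G$ and of its finite-index subgroups, in order to manufacture the required $2$-generated $H$ over a finite-index subgroup of $G$. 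Apart from this construction, every ingredient — the variety membership and the bound on the derived length — is exactly as in (1) and (2).
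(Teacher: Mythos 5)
Your treatment of parts (1) and (2) is correct and is essentially the paper's argument: the base group of $G\,Wr\,\mathbb{Z}$ is a Cartesian power of $G$, so the wreath product lies in ${\mathcal M}{\mathcal A}$, and the $2$-generated subgroup $H$ of Theorem~\ref{th:1} inherits this; the solvable case is the specialization ${\mathcal M}={\mathcal A}^{l}$, and the ``in particular'' clauses follow since a finitely generated torsion-free abelian group is free abelian.

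Part (3), however, has a genuine gap, and you have in fact located it yourself: your plan is to reduce to (1) by finding a finite-index subgroup $K\le G$ with $K_{ab}$ torsion-free, and, as you observe, such a $K$ need not exist (your example $(\mathbb{Z}_2\,wr\,\mathbb{Z})\times\mathbb{Z}_2$ is of exactly this kind). The fallback you sketch --- invoking the $4$-generated embedding theorem with ``more careful bookkeeping'' --- is not an argument, and it points in the wrong direction anyway, since part (3) demands a $2$-generated $H$. The paper avoids the obstacle by not passing to $K\,Wr\,\mathbb{Z}$ at all and by never imposing a condition on $K_{ab}=K/K'$. Write $G_{ab}=\bar{A}\times\bar{T}$ with $\bar{A}$ free abelian of finite rank and $\bar{T}$ finite, lift a basis of $\bar{A}$ to elements $a_i\in G$, and set $K=\mathrm{gp}(a_i,G')$, a finite-index subgroup. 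Then run the construction of Theorem~\ref{th:1} inside $G\,Wr\,\mathbb{Z}$ itself: the function $d$ takes values in the \emph{whole} group $G$, the commutator entries $u_{j,q},v_{j,q}$ used to manufacture the elements of $G'$ are arbitrary elements of $G$, and the kernel argument only uses that the $a_i$ are independent modulo $G'$ (equivalently, that $K/G'\cong\bar{A}$ is free abelian) --- torsion in $G'/K'$, and hence in $K_{ab}$, is irrelevant. This produces a copy of $K$ inside the $2$-generated group $H=\mathrm{gp}(c,d)\le G\,Wr\,\mathbb{Z}$, and since $G\in{\mathcal M}$ the ambient wreath product, hence $H$, lies in ${\mathcal M}{\mathcal A}$. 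So the missing idea is that the relevant hypothesis is on $K/G'$, not on $K_{ab}$, which is what makes the finite-index reduction work for every finitely generated $G\in{\mathcal M}$.
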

\begin{theorem}
\label{th:3}
Let  $G$ be a countable  group  such that  the abelianization $G_{ab}$  is a direct product of a free abelian group and a finite group.  Then $G$  embeds in some $4$-generated subgroup $H$ of   the Cartesian wreath product $G\ Wr \  \mathbb{Z}^3 $. 
\end{theorem}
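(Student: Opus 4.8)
The plan is to adapt the proof of Theorem~\ref{th:1}, using the two additional coordinates of $\mathbb{Z}^3$ to absorb the finite part of $G_{ab}$, which is precisely what obstructs a single-coordinate argument.

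Decompose $G_{ab}=F\times T$ with $F$ free abelian and $T$ finite, and fix a countable generating set of $G$ of the form $\{g_i\}_{i\in I}\cup\{h_1,\dots,h_r\}\cup\{c_l\}_{l\in L}$, where the $g_i$ are lifts of a basis of $F$, the $h_j$ are lifts of generators of the finite direct factor $T$ of $G_{ab}$ (so that $\bar h_j$ has finite order $d_j$ in $G_{ab}$ and hence $h_j^{d_j}\in G'$), and the $c_l$ generate $G'$, the three families together generating $G$. Recall the mechanism of Theorem~\ref{th:1}: one encodes such data in the base group $D$ of $G\ Wr\ \mathbb{Z}$ along the single axis and exhibits, for a suitable $f\in D$, an isomorphic copy of $G$ inside $\langle t,f\rangle$; freeness of $G_{ab}$ is exactly what forces the relevant subgroup of $D$ to appear without collapse. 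The torsion generators are the ones beyond the reach of that argument: reconstructing a function that equals $h_j$ at a single index point and is trivial elsewhere from the $t$-translates of one element of $D$ leaves an infinite telescoping tail that does not cancel over a single copy of $\mathbb{Z}$.

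Inside $G\ Wr\ \mathbb{Z}^3=D\rtimes(\langle t_1\rangle\times\langle t_2\rangle\times\langle t_3\rangle)$ I would run two coupled constructions. Along the $t_1$-axis, repeat the construction of Theorem~\ref{th:1} with the data $\{g_i\}\cup\{c_l\}$, producing an element $f_1\in D$ so that $\langle t_1,f_1\rangle$ contains an isomorphic copy of the subgroup of $G$ generated by the lifted basis of $F$ and by $G'$. Along the $(t_2,t_3)$-plane, choose elements $f_2,f_3\in D$ such that a two-dimensional telescoping identity expresses a function equal to $h_j$ at a fixed index point, trivial elsewhere, as a product of conjugates of $f_2$ and $f_3$ by powers of $t_2$ and $t_3$; the second extra coordinate is introduced precisely so that the tail produced by a shift in the $t_2$-direction can be cancelled against a shift in the $t_3$-direction, which is possible because $h_j^{d_j}\in G'$ makes that tail close up after $d_j$ steps, whereas over a single $\mathbb{Z}$ there is no room for such a cancellation. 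Then set $H=\langle t_1f_1,\;t_2,\;t_3,\;f_2f_3\rangle$, or a similarly bundled quadruple of elements; this group is $4$-generated, and one checks that it contains both the copy of the $F$-and-$G'$ part and the elements $h_j$, hence an isomorphic copy of all of $G$.

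The step I expect to be the real obstacle is the second one: designing $f_2$ and $f_3$ and proving the two-dimensional telescoping identity that isolates each $h_j$ at a single index point with all tails cancelling, and then checking compatibility, namely that the $t_1$-construction and the $(t_2,t_3)$-construction, carried out inside the same base $D$, do not interfere and together yield a faithful copy of $G$ rather than a proper quotient. The first step is Theorem~\ref{th:1} itself, and once the telescoping identity is available the remainder is organisational bookkeeping.
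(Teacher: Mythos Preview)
Your high-level plan matches the paper's: reserve one copy of $\mathbb{Z}$ for the sparse-sequence argument of Theorem~\ref{th:1} and use the remaining $\mathbb{Z}^2$ to cope with the finitely many torsion generators of $G_{ab}$. But the mechanism you sketch for that second step is not the one that actually works, and the intuition you offer for it is misleading.

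You aim to produce, from conjugates of some $f_2,f_3$ by powers of $t_2,t_3$, a function equal to $h_j$ at one lattice point and trivial elsewhere, arguing that the ``tail closes up after $d_j$ steps'' because $h_j^{d_j}\in G'$. Two problems. First, $h_j^{d_j}\in G'$ does not make the tail in $G$ vanish; it only makes its image in $G_{ab}$ vanish, so your telescoping identity in $D$ would hold at best modulo $G'$-valued functions, which is not enough. Second, in the paper the finiteness that is actually exploited is that there are only \emph{finitely many} torsion generators $u_1,\dots,u_t$, not that each $u_j$ has finite order in $G$ (it need not). The key device is Lemma~\ref{le:9}: one builds, by iterated ``discrete integration'' (Lemma~\ref{le:7} and Corollary~\ref{co:8}), functions $f_j\in D_W$ with values in $\mathrm{gp}(u_j)$ such that the iterated commutator $[f_i,b_2;i,b_1;t-i]$ equals the \emph{constant} function $u_i^{(0)}$ and equals $1$ for every $j\ne i$. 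So one does not chase a delta function at a single point; one manufactures a constant function on the whole $(b_1,b_2)$-plane, which commutes with $b_1,b_2$ and therefore behaves exactly like the element $u_i$ itself. This is what your proposal is missing and what you correctly flagged as ``the real obstacle''.

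The packaging of the four generators is also different, and cleaner than your $t_1f_1,\,t_2,\,t_3,\,f_2f_3$. The paper takes $H=\mathrm{gp}(c,b_1,b_2,d)$ with a single base-group element $d$: on the $c$-axis (the sparse axis) $d$ carries, at the first $t$ marked positions, the two-dimensional ``integrals'' $f_1,\dots,f_t$ from Lemma~\ref{le:9}, and at the remaining marked positions the constant functions $a_i^{(0)}$ and $g_m^{(0)}$. Then $[d,b_2;j,b_1;t-j]^{c_j}$ isolates the constant $u_j^{(0)}$ on the $c=1$ slice; commutators $[d^{c_{\mu(m)}},d^{c_{\mu(m')}}]$ give the $G'$-part as in Theorem~\ref{th:1}; and the free-abelian generators come from $d^{c_{\iota(i)}}$ with the same ``exponent-sum zero'' injectivity argument. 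There is no separate $(t_2,t_3)$-construction to reconcile with a $t_1$-construction: everything lives on the single $c$-slice, where the copy of $G$ sits as constant functions over $(b_1,b_2)$.
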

\begin{corollary}
\label{co:4}
\begin{enumerate}
\item 
 Let $G \in {\mathcal M}$  be a countable group such that the abelianization $G_{ab}$ is a direct product of a free abelian group and a finite group.  
 Then $G$  embeds in a $4$-generated subgroup $H\in {\mathcal MA}.$ In particular, every finitely generated  group $G\in {\mathcal M}$   embeds in some $4$-generated  group $H \in {\mathcal MA}.$
\item 
Let  $G$ be a countable  solvable group  of  derived length $l$ such that the  abelianization $G_{ab}$ is  a direct product of a free abelian group and a finite group.  Then $G$  embeds in a $4$-generated solvable group $H$ of  length $l+1.$ In particular, 
every finitely generated solvable group $G$ of  derived length $l$  embeds in some $4$-generated solvable group $H$ of length $l+1.$
\end{enumerate}
\end{corollary}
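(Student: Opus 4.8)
The plan is to deduce Corollary~\ref{co:4} formally from Theorem~\ref{th:3}, using only the standard fact (Birkhoff's theorem) that a variety of groups is closed under subgroups, homomorphic images and arbitrary Cartesian products.

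For part~(1): given a countable $G\in{\mathcal M}$ whose abelianization $G_{ab}$ is a direct product of a free abelian group and a finite group, I would first apply Theorem~\ref{th:3} to embed $G$ into a $4$-generated subgroup $H$ of $W:=G\ Wr\ {\mathbb Z}^3$. It then remains only to check that $W\in{\mathcal M}{\mathcal A}$; since ${\mathcal M}{\mathcal A}$ is subgroup-closed, this gives $H\le W$ in ${\mathcal M}{\mathcal A}$ too. Now $W=D\rtimes{\mathbb Z}^3$, where the base subgroup $D$ is the group of all functions ${\mathbb Z}^3\to G$, hence a Cartesian power of $G$, so $D\in{\mathcal M}$; and $D\trianglelefteq W$ with $W/D\cong{\mathbb Z}^3\in{\mathcal A}$, so by definition of the product variety $W\in{\mathcal M}{\mathcal A}$. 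For the ``in particular'' clause, if $G$ is finitely generated then $G_{ab}=G/G'$ is a finitely generated abelian group, hence (structure theorem) a direct sum of a free abelian group of finite rank and a finite group; thus the hypothesis on $G_{ab}$ holds automatically and the conclusion follows.

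For part~(2): a group is solvable of derived length $\le l$ exactly when it lies in the variety ${\mathcal A}^l$, so I would apply part~(1) with ${\mathcal M}={\mathcal A}^l$, obtaining a $4$-generated $H\in{\mathcal A}^l{\mathcal A}={\mathcal A}^{l+1}$, that is, $H$ solvable of derived length $\le l+1$; since $G\le H$ has derived length $l$, the derived length of $H$ is either $l$ or $l+1$. The ``in particular'' clause follows as in part~(1), the abelianization of any finitely generated group being a direct product of a free abelian group and a finite group.

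I expect no genuine obstacle here: all of the substance is contained in Theorem~\ref{th:3}, and the corollary is essentially bookkeeping, the one point deserving explicit (but routine) attention being that $G\ Wr\ {\mathbb Z}^3\in{\mathcal M}{\mathcal A}$, which rests on the closure of varieties under Cartesian products.
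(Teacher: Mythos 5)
Your deduction is correct and is essentially the same route the paper intends: the paper gives no separate proof of Corollary~\ref{co:4}, treating it as the standard consequence of Theorem~\ref{th:3} via closure of ${\mathcal M}$ under Cartesian powers (so $G\ Wr\ \mathbb{Z}^3\in{\mathcal M}{\mathcal A}$), subgroup-closure of ${\mathcal M}{\mathcal A}$, and the structure theorem for finitely generated abelian groups. Your handling of part (2) via ${\mathcal M}={\mathcal A}^l$ is exactly the intended specialization.
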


\begin{theorem}
\label{th:5}
Let $G$  be a group generated by a finite set $u_1, ..., u_m$ of elements of finite orders $l_1, ..., l_m$, respectively.   Then $G$  embeds in some $4$-generated  subgroup $H$ of $\tilde{W}_s = G\ wr\  (\mathbb{Z}_{s^2} \times \mathbb{Z}_s^2)$ where $s = $ {\rm lcm(}$ l_1^{m}, ..., l_m^{m}${\rm )}. 
\end{theorem}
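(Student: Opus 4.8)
The plan is to reproduce the construction proving Theorem~\ref{th:3} with the infinite cyclic factors of $\mathbb{Z}^3$ replaced by the finite cyclic groups occurring in $C:=\mathbb{Z}_{s^2}\times\mathbb{Z}_s^2$. Since $G$ is generated by the torsion elements $u_1,\dots,u_m$, its abelianization $G_{ab}$ is a finitely generated torsion abelian group, hence finite; in particular $G_{ab}$ is (trivially) the direct product of a free abelian group of rank $0$ and a finite group, so Theorem~\ref{th:3} already embeds $G$ into a $4$-generated subgroup of $G\ Wr\ \mathbb{Z}^3$. The content of Theorem~\ref{th:5} is that, when all the generators have finite order, the \emph{same} embedding can be carried out inside the wreath product $\tilde{W}_s=G\ wr\ C$ with \emph{finite} top group $C$ --- so that, for instance, if $G$ is finite then $H$ is a finite $4$-generated group.

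Write $\tilde{W}_s=D\rtimes C$ with $D=\bigoplus_{C}G$, take the copy of $G$ to be the coordinate subgroup $G_0\le D$ supported at $1_C$, and define four generators $x_1,\dots,x_4$ of $H$ by the formulas used in the proof of Theorem~\ref{th:3} --- products of the standard generators of $C$ with base elements of small, explicitly bounded support that ``list'' the $u_i$ in a sparse pattern. One then runs the same sequence of commutators and products of the $x_i$ that recovers $G_0$ in the proof of Theorem~\ref{th:3}, and checks that it still recovers $G_0$ here; this yields $G\cong G_0\le H=\langle x_1,\dots,x_4\rangle\le\tilde{W}_s$, which is the assertion. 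That $H$ is $4$-generated, and that $\tilde{W}_s$ is finite when $G$ is, are then immediate.

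Making the reconstruction go through requires two things, and these dictate the size of $C$. First, the finitely many coordinates of $\mathbb{Z}^3$ that are ``touched'' during the reconstruction in Theorem~\ref{th:3} --- the supports of the chosen base elements and of all their translates that occur --- lie in a box $B\subseteq\mathbb{Z}^3$ whose side lengths depend only on $m$; one needs the projection $\mathbb{Z}^3\to C$ to be injective on $B$, so that no two relevant coordinates are identified and no translate wraps onto another's support, i.e.\ $s$ (and $s^2$) must exceed the corresponding side lengths of $B$. Second, whenever one of the generators $x_j$ of the form (standard generator of $C$ times base element) is raised to the order of its $C$-part, so that the $C$-component dies, the accumulated base part is a product of conjugates of the $u_i^{\pm1}$ in which the multiplicity of each $u_i$ stays below $l_i^{m}$; taking $s=\operatorname{lcm}(l_1^{m},\dots,l_m^{m})$ forces that base part to collapse to the identity, so these $x_j$ have the intended finite order in $\tilde{W}_s$ and the relations of $C$ do not obstruct the reconstruction.

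The main obstacle is exactly this bookkeeping: one must trace, through the argument of Theorem~\ref{th:3}, which coordinates of $\mathbb{Z}^3$ are used and with what multiplicity each $u_i$ accumulates, and then verify that the stated bounds suffice --- the square on the first factor reflecting the single ``long'' direction along which the list of generators is spread out and repeatedly shifted. Once these bounds are checked, every remaining step of the argument is formally identical to the corresponding step in the proof of Theorem~\ref{th:3}. (As an aside, if $G$ is a finite $p$-group then $s$ is a power of $p$, so $\tilde{W}_s$, and hence $H$, is a finite $p$-group, and if $G$ has finite exponent the same holds for $\tilde{W}_s$.)
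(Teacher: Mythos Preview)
Your overall plan---reduce to a finite-top version of the Theorem~\ref{th:3} construction---is the right one, and the paper does exactly this. But the mechanism you propose for making the reduction work is not correct, and the gap is essential.

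You assert that the base elements used in the Theorem~\ref{th:3} argument, together with the translates occurring during reconstruction, have supports contained in a box $B\subseteq\mathbb{Z}^3$ whose side lengths depend only on $m$, so that one only needs $\mathbb{Z}^3\to C$ to be injective on $B$. This is false. Look back at the element $d$ built in the proof of Theorem~\ref{th:3}: its values along the $c$-axis are the constant functions $a_i^{(0)}$, $g_m^{(0)}$ and the functions $f_j$ from Lemma~\ref{le:9}, all of which are \emph{everywhere nontrivial} on $\mathbb{Z}^2=\langle b_1,b_2\rangle$. The $f_j$ are built from iterated discrete integrals $u_j^{(k)}$, and already $u^{(1)}=(\dots,u^{-1},u^{0},u^{1},\dots)$ is supported on all of $\mathbb{Z}$. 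So there is no finite box, and a naive support-counting argument cannot pass from $\mathbb{Z}^3$ to a finite quotient. (Relatedly, in the paper's construction the four generators are $c,b_1,b_2$ and a single base element $d$, not ``products of standard generators of $C$ with base elements''; so your second point about base parts accumulating when a mixed generator is raised to the order of its $C$-component does not apply either.)

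What makes the descent to $C=\mathbb{Z}_{s^2}\times\mathbb{Z}_s^2$ possible is a \emph{periodicity} observation, which the paper records as Lemma~\ref{le:11} and its corollaries: if $u$ has order $l$ and $d\in D_V$ is periodic of period $r$ with all values in $\langle u\rangle$, then its discrete integral $x$ (with $[x,b]=d$) is periodic of period $lr$. Iterating, $u^{(k)}$ is $l^k$-periodic, so every $u_i^{(k)}$ with $k\le m$ is $l_i^{m}$-periodic, and all the ingredients of $d$---the constants and the $f_i$ built in Lemma~\ref{le:9}---descend to honest elements of the base group of $G\ wr\ \mathbb{Z}_s^2$ once $s=\operatorname{lcm}(l_1^{m},\dots,l_m^{m})$. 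This is the real reason for the choice of $s$. The $s^2$ on the first factor is there for a separate and simpler reason: the strictly uneven sparse sequence $s_i=2^i$, $i=1,\dots,m$, must retain its unevenness modulo the order of $c$, and since $s\ge 2^{m}$ the modulus $s^2$ suffices. Once these two facts are in place, the rest is indeed a verbatim repetition of the proof of Theorem~\ref{th:3}, as you say.
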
 

\begin{corollary}
\label{co:6}
\begin{enumerate}
\item 
 Let $G\in {\mathcal M}$ be a $m$-generated group of exponent $e$.  
 Then $G$  embeds in a $4$-generated group $H\in {\mathcal M}{\mathcal A}_s$ where $s = e^{m+1}$, and so has exponent $e^{m+2}.$
 \item
Let $G\in {\mathcal M}$ be a finite group. Then $G$ embeds in some $4$-generated finite group $H\in {\mathcal MA}$. In particular, 
every finite  solvable {\rm (}$p$- {\rm )} group $G$ of  derived length $l$  embeds in some $4$-generated finite solvable {\rm (}$p$- {\rm )} group $H$ of length $l+1.$ 
\end{enumerate}
\end{corollary}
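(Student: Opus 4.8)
The plan is to read off both parts of the corollary from Theorem~\ref{th:5}. Apply that theorem to get an embedding of $G$ into a $4$-generated subgroup $H$ of $\tilde W_s=G\,wr\,(\mathbb Z_{s^2}\times\mathbb Z_s^2)$, and write $\tilde W_s=D\rtimes T$ with base subgroup $D$ and top group $T=\mathbb Z_{s^2}\times\mathbb Z_s^2$, an abelian group of exponent $s^2$. In every situation of the corollary the generators of $G$ have finite order, so $s$ is finite and hence $T$ is finite; thus $D=G^{|T|}$ is a finite direct power of $G$, whence $D\in{\mathcal M}$ (varieties are closed under finite direct products), and $\tilde W_s/D\cong T$ is abelian. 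Therefore $\tilde W_s$ is ${\mathcal M}$-by-${\mathcal A}_{\exp T}$, i.e. $\tilde W_s\in{\mathcal M}{\mathcal A}_{\exp T}\subseteq{\mathcal M}{\mathcal A}$; if $G$ is finite then $\tilde W_s$ is finite; and if $G$ is a $p$-group then every generator has $p$-power order, so $s$, hence $T$, is a $p$-group and $\tilde W_s$ is a finite $p$-group. Membership in a variety, finiteness, and being a $p$-group are inherited by the subgroup $H$, and $G\hookrightarrow H$, so the qualitative part of the corollary follows once these facts are in place.

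For part (1): pick generators $u_1,\dots,u_m$ of $G$; as $\exp G=e$, each $u_i$ has order $l_i\mid e$, so Theorem~\ref{th:5} gives $H$ with $s=\mathrm{lcm}(l_1^m,\dots,l_m^m)$, a divisor of a power of $e$ depending only on $m$. By the previous paragraph $H\in{\mathcal M}{\mathcal A}_{\exp T}$; inserting the value of $s$ from Theorem~\ref{th:5} identifies this with ${\mathcal M}{\mathcal A}_s$, $s=e^{m+1}$. For the exponent, the key is that $D$ is a direct power of the \emph{group} $G$ itself, so $\exp D=e$, not merely $D\in{\mathcal M}$; combined with $\exp T\mid e^{m+1}$ this gives $x^{e^{m+1}}\in D$ and hence $x^{e^{m+2}}=1$ for every $x\in\tilde W_s$, so $H$ has exponent dividing $e^{m+2}$.

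For part (2): taking any finite generating set of the finite group $G$, Theorem~\ref{th:5} yields a finite $4$-generated $H\in{\mathcal M}{\mathcal A}$ containing $G$. For the solvable statement specialize to ${\mathcal M}={\mathcal A}^l$ (solvable groups of derived length $\le l$): then $H\in{\mathcal A}^l{\mathcal A}={\mathcal A}^{l+1}$ is a finite $4$-generated solvable group of derived length $\le l+1$, and since $G\le H$ the length is $\ge l$; a short computation in $D\rtimes T$ (with $D$ of derived length $l\ge1$ and $T\neq 1$ abelian) pins it at exactly $l+1$. If $G$ is a $p$-group, $\tilde W_s$ and hence $H$ is a finite $p$-group by the first paragraph, giving a finite $4$-generated solvable $p$-group $H$ of derived length $l+1$ with $G\le H$.

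The hard part is essentially nonexistent, since all the real work is in Theorem~\ref{th:5}; what needs (routine) attention is: the exponent bookkeeping in part (1), where one must exploit that $\exp D=e$ rather than only $D\in{\mathcal M}$; the passage from ``$\le l+1$'' to ``$=l+1$'' for the derived length, which rests on the standard fact that wreathing a nontrivial group of derived length $l\ge1$ with a nontrivial abelian group raises the derived length by exactly one (or on a trivial direct-product adjustment); and verifying that variety-, finiteness-, and $p$-group-membership of $\tilde W_s$ descend to $H$, all of which are immediate.
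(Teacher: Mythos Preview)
Your argument for part~(2) is fine and matches the paper, but part~(1) has a genuine numerical gap. You invoke Theorem~\ref{th:5} as a black box, which embeds $G$ into a $4$-generated subgroup of $\tilde W_{s_0}=G\,wr\,(\mathbb Z_{s_0^2}\times\mathbb Z_{s_0}^2)$ where $s_0=\mathrm{lcm}(l_1^m,\dots,l_m^m)$. Since each $l_i\mid e$, you get $s_0\mid e^m$, hence the top group $T$ has exponent $s_0^2\mid e^{2m}$. So your argument only produces $H\in\mathcal M\mathcal A_{e^{2m}}$ with $\exp H\mid e^{2m+1}$, not $H\in\mathcal M\mathcal A_{e^{m+1}}$ with $\exp H\mid e^{m+2}$ as the corollary asserts. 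The sentence ``inserting the value of $s$ from Theorem~\ref{th:5} identifies this with $\mathcal M\mathcal A_s$, $s=e^{m+1}$'' conflates the two different $s$'s and does not hold: for $m\ge 2$ the exponent $e^{2m}$ is strictly larger than $e^{m+1}$.

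The paper does not cite Theorem~\ref{th:5} directly for part~(1); instead it reopens the \emph{proof} of Theorem~\ref{th:5} and replaces the active group $\mathbb Z_{s_0^2}\times\mathbb Z_{s_0}^2$ by the smaller group $\mathbb Z_{e^{m+1}}\times\mathbb Z_e^2$, checking that the construction still goes through under the uniform hypothesis $\exp G=e$. That replacement is exactly what buys the sharper bound $s=e^{m+1}$ (and hence the exponent bound $e^{m+2}$). So to repair your proof you must either accept the weaker bound or, as the paper does, go back into the construction and argue that the smaller top group suffices.
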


\section{Proof of   Theorem \ref{th:1}.}
Let $G$ be a countable group such that $\bar{A} = G_{ab}$ is a  free abelian  group with basis $\{\bar{a}_i : i \in I \subseteq \mathbb{N}\}$. Denote by $a_i$ a preimage of $\bar{a}_i$ in $G$ and set $A=$ gp($a_i: i \in I$).

 Let $C$ be an infinite cyclic group generated by $c$  and $U=G\ Wr\ C \simeq  G\ Wr\ \mathbb{Z}.$  We are to show that $G$ embeds in some $2$-generator subgroup $H$ of $U$, and thus  prove  Theorem \ref{th:1}. Now $D_U$ denotes the base group of $U.$ 

 Let $s_1 < s_2 < ... < s_i < ... $ be a sequence of positive integers such that $s_{i+j} -s_j=s_{k+l}-s_l$ if and only if $i=k$ and $j=l.$ For definiteness, we  take  $s_{i}=  2^i$ for $i= 1, ... .$ This sequence is called a {\it strictly  uneven
sparse sequence}.  

For brevity,  denote $c_i=c^{s_i}, i = 1, 2, ... ,$  and  set  
${\mathcal C} = \{c_i: i = 1, 2, ...\}.$

Suppose that $G$ is generated by a set of generating elements $\{ g_j: j\in J\subset \mathbb{N}\}.$ Let  $g_j = a(j) g_j',$ where $a(j) \in A,$ $g_j'\in G', \ j \in J.$ Then $G=$ gp($a_i, g_j': i\in I, j\in J$).

Let
\begin{equation}
\label{eq:3.1}
g_j' = \prod_{q=1}^{r_j}[u_{j,q}, v_{j,q}];  u_{j,q}, v_{j,q}\in G;   j \in J.
\end{equation}

Let $H=$ gp($c, d$) where $d\in D_U$ is defined as follows. Let ${\mathcal C}_1\subseteq {\mathcal C}, {\mathcal C}_1 = {\mathcal A} \sqcup {\mathcal U} \sqcup {\mathcal V}\  (|{\mathcal A}|=|I|,  |{\mathcal U}|= |{\mathcal V}|=|\{u_{j,q}: q = 1, ..., r_j,  j\in J\}|=|\{v_{j,q}: q = 1, ..., r_j,  j\in J\}|)$ is a disjoint union of sets.  Here  $I$ is in one-to-one correspondence $\iota$ with indexes of elements in ${\mathcal A}$, the set of all elements of the form $u_{j,q}$ is in one-to-one correspondence $\delta$ with the set of  indexes of all elements in ${\mathcal U}$, and  the set of all elements of the form $v_{j,q}$  is in one-to-one correspondence $\lambda$ with the set of indexes of all elements in ${\mathcal V}$. 

Then we set 
$$d(c_{\iota (i)}) = a_i \  {\mathrm{for\ each}} \ i \in I;$$
$$ d(c_{\delta (u_{j,q})}) = u_{j,q}, \  \  \ {\mathrm{for \  each} \   u_{j,q}};$$ \begin{equation}
\label{eq:3.2}
d(c_{\lambda (v_{j,q})})  = v_{j,q}  \   \ {\mathrm{for \  each} \   v_{j,q}},
\end{equation} 
\noindent 
and  $d(c^s) = 1$ in all other cases.   

First, we prove that $H$ contains all the elements  $\tilde{g}_j' \in D_U$ such that $\tilde{g}_j'(1) =  g_j'$ and $ \tilde{g}_j'(c^s) = 1$ for $ s \not=0$. For any pair $(j, q),$  we compute  by direct computation that
\begin{equation}
\label{eq:3.3}
[d^{c_{\delta (u_{j,q})}},  d^{c_{\lambda (v_{j,q})}}](1) = [u_{j,q}, v_{j,q}].
\end{equation}
 
We set $\tilde{g}_j'= [d^{c_{\delta (u_{j,q})}},  d^{c_{\lambda (v_{j,q})}}].$ It remains to verify that every value $\tilde{g}_j'(c^s)$ is trivial for each $s\not=0.$ This statement follows  because the sequence $s_1, s_2, ...,$ is strictly uneven
sparse, and therefore  each other non-trivial value $d^{c_{\kappa (u_{j,q})}}(c^s), s \not= 0,$  meets  the trivial value of $d^{c_{\lambda (v_{j,q})}}(c^s)$. Then we get, by (\ref{eq:3.1}),  that   gp($\tilde{g}_{j}': j \in J$)(1) $= G',$  therefore gp($\tilde{g}_{j}': j \in J$) $\simeq G'.$
Obviously,  $\tilde{g}_j'\in H$ for every $j\in J.$

Secondly,  $d^{c_{\iota (i)}}(1) = a_i$ for each $i\in I.$  Denote $\tilde{a}_i = d^{c_{\iota (i)}}$ for $i\in I$. We set  $\tilde{G}=$ gp($\tilde{a}_i, \tilde{g}_j': i \in I, j\in J$). Then $\tilde{G}(1)=G.$ We must show that $\tilde{G}\simeq G(1)$, and  conclude that $\tilde{G}\simeq G.$ 

Obviously, there is a natural homomorphism $\mu : \tilde{G}\rightarrow \tilde{G}(1)$ with the image $\tilde{G}(1).$  Obviously,  the restriction of $\mu $ to gp($\tilde{g}_{j}': j \in J$) is an isomorphism of $\tilde{G}'$ onto $G'.$ Suppose that  
$z=z(\tilde{a}_{i_1}, ..., \tilde{a}_{i_{k}}, \tilde{g}_{j_1}', ..., \tilde{g}_{j_q}')\in $  
ker ($\mu$). Then the sum of exponents $\sigma_{i_t}$ of $\tilde{a}_{i_t}$ in $z$ is $0$ for each $t = 1, ..., k.$ Since the sequence $s_1, s_2, ..., $ is strictly  uneven
sparse all other nontrivial values of   $\tilde{a}_{i_t}$ corresponding to the its occurs in $z$ are 
$\sigma_{i_t}$-exponents of the corresponding values of $\tilde{a}_{i_t},$ therefore are  trivial. This values don't depend from other factors of $z.$ 
Thus, $z \in \tilde{G}'$ and therefore $z=1.$ Hence $\mu$ is an isomorphism, and $G$ embeds in $H$.

 Theorem is proved. 

\paragraph{\bf Proof of    Corollary \ref{co:2}.} Let $G_{ab}= \bar{A} \times \bar{T},$ where $\bar{A}$ is a free abelian group   with  basis $\{\bar{a}_i: i \in I\}$, as before, and $\bar{T}$ is a finite abelian group. Let $a_i$ be a preimage of $\bar{a}_i$ in $G$ for $i\in I.$ We define $K=$ gp($a_i, G': i \in I$). This subgroup has a finite index in $G.$  Then we define elements $\tilde{a}_i, \tilde{g}_j'$ for $i \in I$ and $j\in J$ as above. We set $\tilde{G}_1=$ gp($\tilde{a}_i, \tilde{g}_j': i \in I, j\in J$).  Then $\tilde{G}_1(1)=K,$ and $\tilde{G}_1\simeq \tilde{G}_1(1)$. Hence $\tilde{G}_1\simeq K.$ This can be confirmed by the same argument as in  the proof of Theorem \ref{th:1}. 

Corollary is proved.

 \section{\bf Proof of   Theorem \ref{th:3}.}

First, we prove a number of auxiliary statements.

Let $G$ be a group and  $V = G\ Wr\ \mathbb{Z}$ be the Cartesian wreath product of $G$ and the  infinite cyclic group $\mathbb{Z} = $  gp($b$). Denote by $D_V$ the base group of $V.$ For any $u \in G$, let $u^{(0)}\in D_V$  be  the constant function $u^{(0)}: B \rightarrow G, u^{(0)}(b^i)=u, i \in \mathbb{Z}. $ Then $[u^{(0)}, b] = 1.$

\begin{lemma}
\label{le:7}
For any element $d \in D_V$  there is an element $x\in D_V$ for which 
\begin{equation}
\label{eq:4.1}
d = [x, b].
\end{equation}
Moreover, for any $u\in G$ there is a unique $x$ for which $x(1) = u.$
\end{lemma}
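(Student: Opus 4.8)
The plan is to unwind the commutator $[x,b]$ inside the base group $D_V$ and thereby reduce the equation $d=[x,b]$ to a first-order recursion for the values of the sought function $x$ along the cyclic group generated by $b$. Since $D_V$ is normal in $V$ we have $[x,b]=x^{-1}b^{-1}xb=x^{-1}x^{b}$, and the conjugation formula \eqref{eq:1.1} gives $x^{b}(b^{i})=x(b^{i+1})$; hence, computing pointwise,
\[ [x,b](b^{i})=x(b^{i})^{-1}x^{b}(b^{i})=x(b^{i})^{-1}x(b^{i+1})\qquad(i\in\mathbb{Z}). \]
Consequently $d=[x,b]$ holds if and only if $x(b^{i+1})=x(b^{i})\,d(b^{i})$ for every $i\in\mathbb{Z}$.

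I would then solve this recursion explicitly. Prescribing the single value $x(1)=u$ with $u\in G$ arbitrary, and propagating forwards and backwards, one is forced to set
\begin{gather*}
x(b^{i})=u\cdot d(1)d(b)\cdots d(b^{i-1})\quad(i\geq 1),\qquad x(1)=u,\\
x(b^{-i})=u\cdot d(b^{-1})^{-1}d(b^{-2})^{-1}\cdots d(b^{-i})^{-1}\quad(i\geq 1).
\end{gather*}
These formulas define a genuine function $B\to G$, i.e.\ an element $x\in D_V$, since the base group of a Cartesian wreath product consists of \emph{all} functions. A direct verification shows this $x$ satisfies $x(b^{i+1})=x(b^{i})d(b^{i})$ for all $i$, hence $[x,b]=d$; taking $u=1$ yields the bare existence claim. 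Conversely, the recursion shows that once $x(1)$ is fixed all remaining values $x(b^{i})$ are determined, which is precisely the uniqueness assertion: for each $u\in G$ there is exactly one $x\in D_V$ with $[x,b]=d$ and $x(1)=u$.

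I do not anticipate a real obstacle; this is essentially a telescoping computation, and the only point I would stress is that it uses crucially that $V$ is the \emph{Cartesian} wreath product. Indeed the solution $x$ produced above typically has infinite support --- already when $d=u^{(0)}$ is a nonzero constant function, in which case any solution satisfies $x(b^{i})=x(1)\,u^{i}$ for all $i\in\mathbb{Z}$ --- so the analogous statement is false for the restricted wreath product $G\ wr\ \mathbb{Z}$.
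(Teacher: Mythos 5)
Your proof is correct and follows essentially the same route as the paper: both reduce $d=[x,b]$ to the pointwise recursion $x(b^{i})^{-1}x(b^{i+1})=d(b^{i})$ via the shift-conjugation formula and then solve it by the telescoping products $x(b^{i})=u\,d(1)\cdots d(b^{i-1})$ and $x(b^{-i})=u\,d(b^{-1})^{-1}\cdots d(b^{-i})^{-1}$, which gives existence and, since $x(1)=u$ determines all other values, uniqueness. Your closing observation that the construction genuinely needs the Cartesian (unrestricted) wreath product is a pertinent extra remark not spelled out in the paper's proof.
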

\begin{proof} 
Let $d = (...\ d_{-2}, d_{-1}, d_{0}, d_1, d_2, ... )$ and $x = (...\  x_{-2}, x_{-1}, x_{0}, x_1, x_2, ... ),$ where $d_i = d(b^i)$ and $x_i = x(b^i).$ Then (\ref{eq:4.1}) is equivalent to the system of equations 
\begin{equation}
\label{eq:4.2}
x_j^{-1}x_{j+1} = d_j, \  j\in \mathbb{Z}.
\end{equation} 
After  setting $x_0 = u, u \in U,$ we uniquely compute for $i\geq 1$, that 
\begin{equation}
\label{eq:4.3}
x_i =  u d_0 ... d_{i-1}\  {\rm and}\  x_{-i} = u d_{-1}^{-1} ... d_{-i}^{-1}.
\end{equation} 

\end{proof}

In other words, $d$ is a discrete (right) derivative of $x$, and $x$ is a discrete integral of $d$. This integral is uniquely defined by $d$ and its value $x(1) = u$. We will denote it as  $I_1(d, u)$ and write  
$I_1(d, u)' = d.$ Then we define $I_2(d, u) = I_1(I_1(d, u), u) ...,$ and so on. For simplicity, we keep $u$ for all integrals.  

\begin{corollary}
\label{co:8}
For each $u\in G$,  there are  a series of elements  $u^{(k)}\in D_V, k = 1, 2, ..., $ for which 
\begin{equation}
\label{eq:4.4}
[u^{(k)}, b] = u^{(k-1)} \  {\rm and}\ u^{(k)}(1) = u.
\end{equation}
In particular, 
\begin{equation}
\label{eq:4.5}
[u^{(k)}, b; k] = u^{(0)}\ {\rm and} \ 
[u^{(i)}, b; k] =1\ {\rm for}\  i < k.
\end{equation} 
\end{corollary}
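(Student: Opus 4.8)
The plan is to iterate Lemma~\ref{le:7} starting from the constant function $u^{(0)}$. First I would observe that $u^{(0)} \in D_V$, so Lemma~\ref{le:7} applies: there is a unique $x \in D_V$ with $[x,b] = u^{(0)}$ and $x(1) = u$; set $u^{(1)} = x = I_1(u^{(0)}, u)$. Since $u^{(1)}$ is again an element of $D_V$, Lemma~\ref{le:7} applies once more, yielding a unique $u^{(2)} \in D_V$ with $[u^{(2)},b] = u^{(1)}$ and $u^{(2)}(1) = u$, and so on. Formally this is an induction on $k$: assuming $u^{(k-1)} \in D_V$ has been constructed, apply Lemma~\ref{le:7} with $d = u^{(k-1)}$ to obtain $u^{(k)} \in D_V$ satisfying $[u^{(k)},b] = u^{(k-1)}$ and $u^{(k)}(1) = u$. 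This establishes \eqref{eq:4.4} for all $k \geq 1$; in the notation of the remark following the lemma, $u^{(k)} = I_k(u^{(0)}, u)$.

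Next I would derive \eqref{eq:4.5} from \eqref{eq:4.4} by a second, short induction on $k$. For the iterated commutator $[u^{(k)}, b; k]$: unwinding the definition $[g,f;k+1] = [[g,f;k],f]$ together with $[u^{(k)},b] = u^{(k-1)}$, one gets $[u^{(k)},b;k] = [[u^{(k)},b],b;k-1] = [u^{(k-1)},b;k-1]$, and iterating down to the base case gives $[u^{(k)},b;k] = [u^{(1)},b;1] = [u^{(1)},b] = u^{(0)}$. For the second assertion, fix $i < k$; applying the same relation $i$ times gives $[u^{(i)},b;i] = u^{(0)}$, and then the remaining $k-i$ applications of $[-,b]$ yield $[u^{(i)},b;k] = [u^{(0)},b;k-i] = 1$, since $[u^{(0)},b] = 1$ (the constant function commutes with $b$, as noted before the lemma) and hence any further commutator with $b$ is trivial.

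There is essentially no obstacle here: the corollary is a formal consequence of Lemma~\ref{le:7} applied repeatedly, and the only point requiring a modicum of care is bookkeeping the iterated-commutator identities $[u^{(k)},b;k] = u^{(0)}$ and $[u^{(i)},b;k] = 1$ for $i<k$, both of which reduce to the telescoping relation $[u^{(k)},b;j] = [u^{(k-1)},b;j-1]$ and the fact that $u^{(0)}$ is central relative to $b$. I would also remark that the uniqueness clause in Lemma~\ref{le:7} makes each $u^{(k)}$ canonically determined by $u$, which is why the notation $u^{(k)}$ (rather than a choice) is justified.
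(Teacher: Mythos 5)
Your proposal is correct and follows essentially the same route as the paper, which simply defines $u^{(k)} = I_k(u^{(0)}, u)$ by iterating Lemma~\ref{le:7}; you additionally spell out the telescoping verification of \eqref{eq:4.5} (using $[u^{(0)},b]=1$), which the paper leaves implicit.
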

\begin{proof}
We define $u^{(k)} = I_k(u^{(0)}, u)$ for $k = 1, 2, ...$. 

\end{proof}

Now, let $G$ be a group and  $W = G\ Wr \ \mathbb{Z}^2$ be a Cartesian wreath product, where $\mathbb{Z}^2 =$ gp($b_1$) $\times $ gp($b_2$) is the free abelian group  of rank $2$ with basis $\{b_1, b_2\}.$  Let $D_{W}$ denote the base group of $W$ consisting of all functions  $f : \mathbb{Z}^2 \rightarrow G, \  f(i, j) = f(b_1^i, b_2^j) = u_{ij}, \ i, j \in \mathbb{Z}.$

For any $u\in G$ we consider the subgroup $D_{u} = $ gp($u$) $Wr$ $\mathbb{Z}^2$  of $W$ generated by $b_1, b_2$ and all functions $f : \mathbb{Z}^2 \rightarrow$ gp($u$) which make up 
the base group $D_{u} \leq D_W$. Let  $f(i, j) = f(b_1^i, b_2^j) = u^{\alpha_{ij}}, 
\alpha_{ij} \in \mathbb{Z}, \ i, j \in \mathbb{Z}.$ 
Then for  any $i_0\in \mathbb{Z}$,  the set $C(i_0)(K)$  of elements of the form   $K =  \{u^{\alpha_{i_0,  j}}, j \in \mathbb{Z}\}$,  will be called $i_0$-{\it column} of $D_u$, and for any $j_0 \in \mathbb{Z}$ the set $R(j_0)(L)$ of elements of the form  
$L = \{u^{\alpha_{i,  j_0}}, i \in \mathbb{Z}\}$  will be called $j_0$-{\it row} of $D_u$. The set $K$ can be considered as element of the base group $D_{2, u}$   of $V_{2, u} =$ gp($u$)$Wr$ gp($b_2$), and similarly the set $L$ can be treated as element of the base group $D_{1,u}$ of $V_{1, u} =$ gp($u$)$Wr$ gp($b_1$).

Let $C(i_0)(w^{(0)}),$ be a constant column corresponding to $w\in $ gp($u$). By  Corollary \ref{co:8}, we get a series of columns $C(i_0)(w^{(t)})$ such that $C(i_0)(w^{(t)})' = C(i_0)(w^{(t-1)})$ and $C(i_0)(w^{(t)})(i_0, 1) = w, \  t = 1, 2, ...$.  Then $[C(i_0)(w^{(t)}), b_2; t] = C(i_0)(w^{(0)})$ and 
$[C(i_0)(w^{(t)}), b_2; t + r] = 1$ for every $r \geq 1.$ 

Similarly, we get the elements 
$R(j_0)(w^{(t)}), \  t = 0, 1, 2, ...$, that  satisfy the following  properties: $[R(j_0)(w^{(t)}), b_1; t] = $
$R(j_0)(w^{(0)})$ and 
$[R(j_0)(w^{(t)}), b_1; t + r] = 1$ for every $r \geq 1.$

Now we are ready to prove a key lemma that allows us to distinguish individual elements of a given finite set, while at the same time making other elements of this set trivial. We are dealing with the group $W=
G\  Wr $(gp($b_1$) $\times $ gp($b_2$)) defined above. 

\begin{lemma}
\label{le:9}
Let $u_1, ..., u_t$ be a finite set of nontrivial elements of $G$. Let $u_i^{(0)}\in D_W$ be a constant function with the value $u_i$.  Then there exist  functions $f_i\in D_W,$ all of whose values  belong to gp($u_i$), which satisfy the following properties. 
\begin{equation}
\label{eq:4.6}
[f_i, b_2; i; b_1, t - i] = u_i^{(0)}, [f_j, b_2; i; b_1, t - i] = 1 \  {\rm for} \ i\not=j, \ i, j \in \{1, ..., t\}.
\end{equation}
\end{lemma}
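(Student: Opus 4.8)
The idea is to build each $f_i$ as a ``tower'' of integrals in two independent directions: integrate $i$ times in the $b_2$-direction and $t-i$ times in the $b_1$-direction, starting from a function supported on a single point of $\mathbb{Z}^2$. Concretely, fix for each $i$ a distinct base point, say $(i, 0) \in \mathbb{Z}^2$, and let $e_i \in D_W$ be the function with $e_i(i,0) = u_i$ and all other values trivial. I would first apply Lemma \ref{le:7} / Corollary \ref{co:8} in the $b_2$-direction to the column $C(i)$: there is a function $e_i^{(i)}$, with all values in gp($u_i$) and concentrated on the column $\{(i,j): j \in \mathbb{Z}\}$, such that $[e_i^{(i)}, b_2; i] = e_i$ and $[e_i^{(i)}, b_2; i+1] = 1$. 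Then I would apply Corollary \ref{co:8} again, now in the $b_1$-direction, to lift $e_i^{(i)}$ through $t-i$ further integrals, producing $f_i$ with all values in gp($u_i$), $[f_i, b_1; t-i] = e_i^{(i)}$ (up to the $b_2$-structure being carried along unchanged, since $b_1$ and $b_2$ commute and act on disjoint coordinate directions), and $[f_i, b_1; t-i+1] = 1$. Combining, $[f_i, b_2; i; b_1, t-i] = [e_i^{(i)}, b_1; t-i] = [\,\text{something concentrated on column } i\,]$; the point is that iterating the $b_2$-commutator $i$ times collapses the $j$-direction to the single value at $j=0$, and iterating the $b_1$-commutator $t-i$ times then produces a constant function — but constant \emph{only after} both collapses have occurred, which forces the answer to be $u_i^{(0)}$.

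The second assertion, that $[f_j, b_2; i; b_1, t-i] = 1$ for $i \neq j$, is the crux and the place where the bookkeeping must be done carefully. Here one uses that $f_j$ was built from $j$ integrations in the $b_2$-direction, so $[f_j, b_2; k] = 1$ as soon as $k > j$; hence if $i > j$ the inner bracket $[f_j, b_2; i]$ already vanishes. Symmetrically, $f_j$ carries $t-j$ integrations in the $b_1$-direction, so $[\,\cdot\,, b_1; m] = 1$ once $m > t-j$; hence if $t - i > t - j$, i.e. $i < j$, the outer bracket kills whatever survived. Since $i \neq j$ means either $i > j$ or $i < j$, exactly one of these two mechanisms applies, and the composite commutator is trivial. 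The case $i = j$ is precisely the one where neither mechanism fires, and we land on the constant $u_i^{(0)}$ as computed above.

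The main obstacle I anticipate is verifying cleanly that the two integration processes genuinely commute and do not interfere — that is, that integrating in $b_2$ and then in $b_1$ applied to a function concentrated on one column yields, after the matching numbers of $b_1$- and $b_2$-commutators, exactly the constant function $u_i^{(0)}$ and not some other element of the base group. This reduces to the observation, already implicit in the text's discussion of rows and columns, that $D_W = (\text{gp}(u) \, Wr \, \text{gp}(b_1)) \, Wr \, \text{gp}(b_2)$ can be analyzed coordinate-wise, with $b_1$ acting only on the row index and $b_2$ only on the column index, so the discrete-derivative identity $[x,b]=d \Leftrightarrow x_j^{-1}x_{j+1}=d_j$ of Lemma \ref{le:7} applies independently in each direction. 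Once that is granted, the exponent-counting of how many commutators annihilate $f_j$ in each direction is routine, and the conclusion follows.
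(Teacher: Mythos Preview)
Your construction does not achieve what you claim, because the ``one-more-derivative annihilates'' principle from Corollary~\ref{co:8} depends crucially on the base function being \emph{constant}. You start from a delta function $e_i$ supported at a single point; but $[e_i,b_2]\neq 1$ (it takes the value $u_i$ at $(i,-1)$ and $u_i^{-1}$ at $(i,0)$), so your assertion $[e_i^{(i)},b_2;i+1]=1$ is false. Likewise $e_i^{(i)}$ is supported on a single column, hence $[e_i^{(i)},b_1]\neq 1$, and your claim $[f_i,b_1;t-i+1]=1$ fails for the same reason. Consequently the vanishing $[f_j,b_2;i;b_1;t-i]=1$ for $i\neq j$ breaks down in both cases $i>j$ and $i<j$. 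Even the principal identity fails: since all values lie in the cyclic group $\mathrm{gp}(u_i)$ the $b_1$- and $b_2$-derivatives commute, and one computes
\[
[f_i,b_2;i;b_1;t-i]=\bigl[[f_i,b_1;t-i],b_2;i\bigr]=[e_i^{(i)},b_2;i]=e_i,
\]
which is the delta function, not the constant function $u_i^{(0)}$ required by the statement.

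The paper's construction avoids exactly this pitfall by building $f_i$ upward from the constant function $u_i^{(0)}$ rather than from a delta: the $0$-row of $f_i$ is taken to be the $(t-i)$-fold $b_1$-integral $u_i^{(t-i)}$ of the constant, and then each column is the $i$-fold $b_2$-integral of its $0$-row value (itself a constant column). Because the bottom layer is constant, one genuinely has $[u_i^{(0)},b_1]=[u_i^{(0)},b_2]=1$, which is what makes the over-differentiation vanish. The case $j<i$ is then immediate, while the case $j>i$ is not automatic (the $b_2$-integration perturbs all rows away from row $0$) and requires the short induction argument the paper supplies. Your plan would need to be rebuilt with the constant $u_i^{(0)}$ as the starting point, and you should expect the $j>i$ case to need a separate argument.
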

\begin{proof}
To construct $f_i$ we define its $0$-row 
$$R(0)(u_i^{(t-i)}) = ( ... u_i^{\alpha_{-1, 0}},  u_i^{\alpha_{0, 0}}, u_i^{\alpha_{1, 0}}, ... ).$$  
Then we build each column as
$$C(j)(u_i^{\alpha_{j, 0}})^{( i)}$$
\noindent and we have as a result $f_i$. By construction every $j$th column of 
$[f_i, b_2; i]$ is a constant function with value  $u_i^{\alpha_{0, j}}, j\in \mathbb{Z}.$ Then  
$$[f_i, b_2; i, b_1; t-i] = u_i^{(0)}\  {\rm and }\ [f_q, b_2; i, b_1; t-i] =  1 \ {\rm for}\  q < i.$$
It happens because the columns are constructed as discrete integrals. 

Let $q > i.$ By construction $R_0(u_q^{(t-q)}, b_1; t-i] = 1.$
In other words, this  is true for a $0$-row that does not change during the process of differentiating columns.

Consider a more general case.
Assume that $u \in G$ and  $q \geq 0$. For $r\geq 0$, we fix $0$-row $R(0)(u^{(r)} = ( ... u^{\alpha_{0, -1}},  u^{\alpha_{0, 0}}, u^{\alpha_{0, 1}}, ... ).$.  Then we expand  this row to element $f \in D_V$  by adding the columns 
$C(j)(u^{\alpha_{j,0}})^{(s)}$  for some $s\geq 0$. Obviously, for $s=0$ we have $[f, b_1; r + 1] = 1.$ We will prove by induction on $s$ that this equality is true in general case.  

Let it is true for $s-1$.  The value $u^{\alpha_{ij}}$ of the function $[f, b_1; r + 1]$ in any point $(i, j)$ can be computed as follows. There is a $\mathbb{Z}$-linear function $L(\alpha_{i,j}, \alpha_{i+1,j}, ..., \alpha_{i+r+1,j}$.
This function does not depend from $s$. By our assumptions, the value of this function for any $s$ is $0$ for $j=0$ and any $i.$ By the assumption of induction, for $s-1$ the value of this function is zero for every $j$.

Then this is true for $j = 1$ and $j=-1$. Indeed, if the $1$-row for $s$ is 
$$( ..., u^{\beta_{-1, 1}}, u^{\beta_{0, 1}}, u^{\beta_{1, 1}}, ...)$$
\noindent 
and $0$-row is  
$$( ... u^{\gamma_{-1, 1}}, u^{\gamma_{0, 1}}, u^{\gamma_{1, 1}} ...),$$ 
\noindent
then $0$-row for $s-1$ is 
$$( ..., u^{\beta_{-1, 1}- \gamma_{-1, 1}}, u^{\beta_{0, 1} - \gamma_{0,1}}, u^{\beta_{1, 1} - \gamma_{1,1}}, ...).$$

Then by the assumption of induction
$$L(\beta_{i,j} - \gamma_{i,j}) =  L(\beta_{i,j})  - L(\gamma_{i,j}) = - L(\gamma_{i,j} = 0.$$ Similarly, this can be proved for $j = -1$. 
Continuing, we will get that this is true for $s$ and each $j$.

\end{proof}

 We proceed directly to the proof of the Theorem \ref{th:3}. 
 
 Let  $G$ be a countable  group  such that the abelianization $G_{ab}$ is a direct product of a free abelian group  $\bar{A}$ with a basis $\{\bar{a}_i : i \in I \subseteq \mathbb{N}\}$ and  a finite abelian group $\bar{U}=$ gp($\bar{u}_1, ..., \bar{u}_t$).  Let $a_i$ denote a preimage of $\bar{a}_i$ and $u_j$ denote a preimage of $\bar{u_j}$ in $G$.
 Let $A=$ gp($a_i: i \in I$) and $U=$ gp($u_1, ..., u_t$). 
 
 Consider the Cartesian wreath product  $\tilde{W} = G\ Wr\ (C \times B)$, where $C$ = gp($c$) is an infinite cyclic group and $B$ is  a free abelian group with base $\{b_1, b_2\}.$ Then $\tilde{W} \simeq G\ Wr\ \mathbb{Z}^3.$
 By $D_{\tilde{W}}$ we denote the base group of the group $\tilde{W}.$
 
 First, we will do the same as in the proof of  Theorem \ref{th:1}. Let $ s_1 < ... < s_i < ... $ be a  strictly  uneven
sparse sequence of positive integers, i.e., $s_{i+j}-s_j=s_{k+l}-s_l$ if and only if $i=k$ and $j=l.$ For definiteness, we  take  $s_{i}= 2^i$ for $i= 1, 2, ... .$ 
For brevity,  denote $c_i=c^{s_i}, i =  1, 2, ... ,$  and  set  
${\mathcal C} = \{c_i: i =  1, 2, ...\}.$

Suppose that $G$ is generated by a set of  elements $\{ g_m: m\in M \subset \mathbb{N}\}$ such that   $G'$ = gp($g_{m,m'} = [g_m, g_m'] : m, m'\in M$).  Then $G=$ gp($a_i, u_{j},  g_{mm'}: {\rm for}\ i \in I,  j\in \{1, ..., t\}, m, m'\in M$).

Let $H=$ gp($c, b_1, b_2, d$) where $d\in D_{\tilde{W}}$ is defined as follows.

Let  ${\mathcal C}_1\subseteq {\mathcal C}$ be a disjoint union  $\{c_1, ..., c_t\} \sqcup {\mathcal I} \sqcup  {\mathcal M}$ where $|{\mathcal I}|=|I|,  |{\mathcal M}|= |M|$.
 Here  $I$ is in one-to-one correspondence $\iota$ with the set of indexes of elements in ${\mathcal I}$, $M$ is in one-to-one correspondence $\mu$ with the set of  indexes  of elements in ${\mathcal M}$.

Let    $D_W$ be the  base group in $W = G\ Wr\  {\rm gp(} (b_1{\rm )}\ \times {\rm gp)}{b_2}$) and  $u^{(0)} \in D_W$ denote a constant function with the value $u\in G.$ Let $f(j) \in D_W, j = 1, ..., t,$ are the elements  constructed in  Lemma \ref{le:9}. When constructing $d$ we use $f(j), j = 1, ..., t; a_i^{(0)}, i \in I,$ and $g_m^{(0)}, m \in M.$  
All values of $d$ belong to $D_W$. 

Then we set 

$$d(c_j) =  f(j),\  j = 1, ..., t;$$
\begin{equation}
\label{eq:4.7}
d(c_{\iota (i)}) = a_i^{(0)},\ i \in I;
\end{equation}
$$ d(c_{\mu (m)}) = g_m^{(0)},\ m \in M.$$
\noindent 
and we set $d(c^s) = 1$ in all other cases when $s\not\in {\mathcal C}_1.$    
 
Then $G^{(0)} =$ gp($u_j^{(0)}, a_i^{(0)}, g_m^{(0)} : j = 1, ..., t; i \in I, m \in M$) $\simeq G.$ For any 
$u^{(0)},$ one has $u^{(0)}, b_1] = u^{(0)}, b_2] = 1.$ We note also that $G^{(0)} =$ gp($u_j^{(0)}, a_i^{(0)}, g_{m,m'}^{(0)} : j = 1, ..., t; i \in I, m, m' \in M$) 

For any $h \in D, h(1)$ means $h(1, 1, 1)$. We  write $\tilde{h}(1)$ when all other values $h(c^i, b_1^j, b^k)$ are trivial.     

At first we prove by direct computation that $H$ contains all elements  of the form $\tilde{g}_{m,m'}^{(0)}(1)$ (remind that $c_i$ means $c^{s_i})$ :
\begin{equation}
\label{eq:4.8}
\tilde{g}_{m,m'}^{(0)}(1) = [d^{c_{\mu (m)}},  d^{c_{\mu (m')}}], \  m, m' \in M.
\end{equation}  
Similarly we get
\begin{equation}
\label{eq:4.9}
\tilde{u}_j^{(0)}(1) = [d, b_2; j, b_1, t - j]^{c_j}, \   j = 1, 2, ... , t.
\end{equation}
If $|I| \geq 2$, we cannot get $\tilde{a}_i^{(0)}$ in the similar way. Instead we will use the following elements: 
\begin{equation}
\label{eq:4.10}
\bar{a}_i^{(0)}(1) = (d^{c_{\iota (i)}}, \  i \in I). 
\end{equation}
We have
\begin{equation}
\label{eq:4.11}
\tilde{G} = gp(\tilde{u}_j^{(0)}(1), \tilde{g}_{m,m'}^{(0)}(1), a_i^{(0)}(1) : j = 1, ..., t; m, m'\in M, i \in I) 
\simeq G. 
\end{equation}
 Let 
 \begin{equation}
 \label{eq:4.12}
 \bar{G} = {\rm gp}(\tilde{u}_j^{(0)}(1), \tilde{g}_{m,m'}^{(0)}(1), \bar{a}_i^{(0)}(1) : j = 1, ..., t; m, m'\in M, i \in I) \leq H.
 \end{equation}
There is a natural homomorphism (projection) $\nu$ of $\bar{G}$ onto $\tilde{G}.$ In fact, $\nu$ is an isomorphism. Indeed, suppose that for some word $z$ we have 
\begin{equation}
\label{eq:4.13}
 z(\bar{a}_1^{(0)}(1), ..., \bar{a}_k^{(0)}(1), \tilde{u}_1^{(0)}(1), ..., \tilde{u}_t^{(0)}(1), \tilde{g}_{m_1.m_1'}^{(0)}(1), ..., \tilde{g}_{m_q,m_q'}^{(0)}(1)) \in  {\rm ker}(\mu ).
 \end{equation} Since $a_1, ..., a_k$ induce a part of base of the free abelian group $G_{ab}$ every  exponent sum $\sigma_i$ of $\tilde{a}_i^{(0)}(1), i = 1, ..., k, $ in $z$ is $0.$  Then every other value corresponding to entries of $\tilde{a}_i$ in $z$  is trivial. Then $g$ is independent  of $\bar{a}_i(1)$ for each $i\in I.$ Therefore, $g$ has only trivial values outside of $1$. It follows that $\bar{G} \simeq G.$

Theorem is proved. 

\begin{remark}
\label{re:10}
If the group G is finitely generated, then the proof of  Theorem \ref{th:3}\ can be carried out without introducing elements of the form $a_i, i \in I.$
\end{remark}
 
\section{Proof of Theorem \ref{th:5}} 

Let us see what can be said if a group $G$ is generated by a finite set of elements of finite orders.

 Let $G$ be a group and $V = G\ Wr\ \mathbb{Z}$ be a wreath product of $G$ and $\mathbb{Z} =$ gp($b$). As usual $D_V$ means the base group of $V$. 

\begin{lemma}
 \label{le:11}
Let $u\in G$ be an element of a finite order $l.$ Suppose, that every component of $d \in D_V$ belongs to 
{\rm gp(}$u${\rm )} and  $d$ is a non-constant periodic function. This means that there is a number $r > 0$ (period) such that 
$d(b^{i+r}) = d(b^i)$ for all $i.$  Let $d(1) = u^{i_0}, i_0 \in \mathbb{N}.$ 

Let $x \in D_V$ satisfy the condition $[x, b] = d$ and $x(1) = u^{i_0}.$  Then $x$ is periodic  with period $lr$.  
\end{lemma}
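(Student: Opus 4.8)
The plan is to work with the explicit formula for the discrete integral given in Lemma \ref{le:7}. Recall that $[x,b]=d$ with $x(1)=u^{i_0}$ forces, by \eqref{eq:4.3}, the formula $x(b^n)=u^{i_0}\,d(1)d(b)\cdots d(b^{n-1})$ for $n\geq 1$ (and the analogous inverse-product formula for negative indices). Since every value of $d$ lies in the abelian group $\mathrm{gp}(u)\cong\mathbb{Z}_l$, I would pass to additive exponents modulo $l$: write $d(b^i)=u^{e_i}$ and $x(b^i)=u^{f_i}$ with $e_i,f_i\in\mathbb{Z}/l\mathbb{Z}$, so that \eqref{eq:4.2} becomes $f_{j+1}-f_j=e_j$, i.e. $f_n=f_0+\sum_{k=0}^{n-1}e_k$ for $n\geq 1$, with $f_0=i_0$.

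First I would observe that $x$ being periodic with some period $p$ is equivalent to $f_{i+p}=f_i$ for all $i$, which by the recursion is equivalent to $\sum_{k=i}^{i+p-1}e_k\equiv 0\pmod l$ for all $i$; since $d$ has period $r$, a necessary and sufficient condition is that $p$ is a multiple of $r$ and that the "total mass over one $x$-period" $S_p:=\sum_{k=0}^{p-1}e_k\equiv 0\pmod l$. Taking $p=mr$ gives $S_{mr}=m\,S_r$ where $S_r=\sum_{k=0}^{r-1}e_k$. Thus $x$ has period $mr$ iff $m\,S_r\equiv 0\pmod l$, and the minimal such positive $m$ is $m=l/\gcd(l,S_r)$. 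In particular $x$ is always periodic with period $lr$, since $l\cdot S_r\equiv 0\pmod l$ trivially; this already proves the stated conclusion.

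The one point that needs a little care — and which I expect is the intended content beyond the trivial divisibility — is to make sure the hypothesis "$d$ is non-constant" is used correctly and that the period claimed is genuinely attained (not a proper divisor working by accident). I would note that $lr$ is a period for every such $x$ regardless of non-constancy, so the lemma as literally stated needs only the divisibility argument above; non-constancy of $d$ guarantees $r$ itself is chosen as the least period of $d$, which keeps the bookkeeping of "period $r$" unambiguous. If one wants the sharper statement that $lr$ is the \emph{least} period in the generic case, I would add the remark that this holds precisely when $\gcd(S_r,l)=1$, and otherwise the least period of $x$ is $\big(l/\gcd(S_r,l)\big)r$; but for the application only the upper bound $lr$ is needed, and that follows immediately from $f_{i+lr}-f_i=l\,S_r\equiv 0\pmod l$ together with the verification that the negative-index formula \eqref{eq:4.3} gives the same exponents, which is routine since $\mathrm{gp}(u)$ is abelian.
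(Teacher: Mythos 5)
Your argument is correct and is essentially the paper's own proof: both use the explicit integral formula \eqref{eq:4.3} from Lemma \ref{le:7} and observe that the contribution of $d$ over $lr$ consecutive indices is the product over one $r$-period taken $l$ times, hence trivial in ${\rm gp}(u)$ of order $l$; you merely rewrite this multiplicative computation additively in exponents mod $l$. Your side remarks on the least period and on the superfluity of the non-constancy hypothesis go beyond the paper but do not change the substance.
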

 \begin{proof}
Let $d = (... d_{-2}, d_{-1}, d_{0}, d_1, d_2, ... )$ and $x = (... x_{-2}, x_{-1}, x_{0}, x_1, x_2, ... ),$ where $d_i = d(b^i)$ and $x_i = x(b^i).$  We find $x$ as in the proof of  Lemma \ref{le:7}.

Then 
for any $i\geq 0$, by (\ref{eq:4.3}), 
$$x_{i +lr} = \prod_{j=0}^{lr-1}u_{i_0}d_j = u_{i_o}\ {\rm and}\  x_{-i - lr} = \prod_{j=-1}^{- lr}u_{i_0}d_j^{-1} 
= u_{i_0},$$
\noindent
because, the product of all elements from $l$ periods is $1$. 
 Therefore $x(i+r) = x(i)$ for all $i.$ 
  
 \end{proof}
 \begin{corollary}
 \label{co:12}
 Suppose that the conditions of  Lemma \ref{le:11}  are satisfied. Let $V_{lr} = G\ wr\ \mathbb{Z}_{lr}$ be the wreath product of $G$ with the cyclic group $\mathbb{Z}_{lr} =$ {\rm gp(}$b_{lr}${\rm )} of order $lr$. By $D_{V_{lr}}$ we denote the base subgroup of $V_{lr}$. Since $d$ and $x$ are $lr$-periodic they can be considered as elements of $D_{V_{lr}}$. Then $d = [x, b_{lr}]$   in $V_{lr}$.  
 \end{corollary}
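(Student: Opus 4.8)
The plan is to deduce the identity in the finite wreath product from the identity $d=[x,b]$ already established in $V=G\ Wr\ \mathbb{Z}$, by pushing it through a reduction homomorphism onto $V_{lr}$. First I would record the consequence of Lemma \ref{le:11} that $x$ is $lr$-periodic; since the given $d$ is $r$-periodic it is a fortiori $lr$-periodic, so both $x$ and $d$ lie in the subgroup $P\le D_V$ of all $lr$-periodic functions $B\to G$. The shift action of $b$ preserves $P$ (because $B=\mathbb{Z}$ is abelian, so $b$ commutes with $b^{lr}$), hence $\langle P,b\rangle=P\rtimes\langle b\rangle$ is a subgroup of $V$, and there is a surjective homomorphism $\varphi:\langle P,b\rangle\to V_{lr}$ sending $b\mapsto b_{lr}$ and each $f\in P$ to the function $\bar f:\mathbb{Z}_{lr}\to G$ it induces on the quotient. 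Well-definedness and multiplicativity of $\varphi$ on $P$ are immediate, and compatibility with the two shift actions is exactly the conjugation rule (\ref{eq:1.1}) read modulo $lr$.

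Then I would simply apply $\varphi$ to $[x,b]=d$ (valid in $V$ by Lemma \ref{le:7}) to obtain $[\bar x,b_{lr}]=\bar d$ in $V_{lr}$, where $\bar x,\bar d\in D_{V_{lr}}$ are the elements determined by $x,d$. Equivalently and more concretely, one can verify the relation componentwise: by (\ref{eq:4.2}) the components of $x$ and $d$ satisfy $x_j^{-1}x_{j+1}=d_j$ for every $j\in\mathbb{Z}$, and $lr$-periodicity of both sides makes these equalities well-defined for $j$ ranging over $\mathbb{Z}_{lr}$, which is precisely the assertion $[\bar x,b_{lr}]=\bar d$ in $V_{lr}$.

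The only genuinely substantive point is the consistency condition needed for a solution of $[\,\cdot\,,b_{lr}]=d$ to exist at all over the finite cyclic group: running once around $\mathbb{Z}_{lr}$ the telescoping product must close up, that is, $d_0d_1\cdots d_{lr-1}=1$. But all $d_j$ lie in the abelian group ${\rm gp}(u)$ and $d$ has period $r$, so this product equals $\bigl(d_0d_1\cdots d_{r-1}\bigr)^{l}$, and the bracketed element lies in a group of order $l$, hence its $l$-th power is trivial --- which is exactly the computation carried out in the proof of Lemma \ref{le:11}. So I expect no real obstacle here; the corollary is bookkeeping on top of Lemma \ref{le:11}, the mildly delicate part being only to state cleanly why $V_{lr}$ simultaneously receives $x$, $d$, and the relation between them.
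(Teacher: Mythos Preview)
Your argument is correct and is precisely the justification the paper leaves implicit: the corollary is stated without proof there, as an immediate consequence of Lemma~\ref{le:11}. Both the homomorphism formulation and the componentwise check via (\ref{eq:4.2}) are the natural way to see it; the only remark is that your final paragraph on the ``consistency condition'' $d_0d_1\cdots d_{lr-1}=1$ is not needed as a separate verification---once Lemma~\ref{le:11} hands you an $lr$-periodic $x$ with $x_j^{-1}x_{j+1}=d_j$, the relation $[\bar x,b_{lr}]=\bar d$ holds automatically in $V_{lr}$, and the telescoping identity is a consequence rather than a hypothesis.
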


 Let  $u\in G, u\not=1$ and $u^l = 1.$ Then 
 $$u^{(1)} = ( ... u^{-2}, u^{-1}, u^{0}, u^{1}, u^{2}, ... )$$ 
\noindent is obviously $l$-periodic.  By   Lemma \ref{le:11}, $u^{(2)}$ is  $l^2$-periodic, and so on. 
 
 We can consider the elements $u^{(0)}, u^{(1)}, ..., u^{(t)}$ as elements of $D_{V_{l^{t}}}$, the base subgroup of  $V_{l^{t}} = G\ Wr\ \mathbb{Z}_{l^{t}},$ where $\mathbb{Z}_{l^{t}} =$   gp($b_{l^{t}}$) is the cyclic group of order $l^{t}.$

It follows that we have the following finite analogue of   Corollary \ref{co:8}.
 
 \begin{corollary}
 \label{co:13} For each $u\in G, u^l = 1,$  there are  a series of elements  $u^{(k)}\in D_{V_{l^{t}}}, k = 1, 2, ..., t $ for which 
\begin{equation}
\label{eq:5.1}
[u^{(k)}, b] = u^{(k-1)} \  {\rm and}\ u^{(k)}(1) = u.
\end{equation}
In particular, 
\begin{equation}
\label{eq:5.2} 
[u^{(k)}, b; k] = u^{(0)}\ {\rm and} \ 
[u^{(i)}, b; k] =1\ {\rm for}\  i < k.
\end{equation} 
\end{corollary}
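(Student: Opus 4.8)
The plan is to produce the elements $u^{(k)}$ first inside the \emph{infinite} wreath product $V = G\ Wr\ \mathbb{Z}$, where they are already available from Corollary \ref{co:8}, and then to verify that each of them is $l^{t}$-periodic, so that they descend to the base group of $V_{l^{t}}=G\ wr\ \mathbb{Z}_{l^{t}}$ while retaining all the required commutator identities.

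So, first I would fix $u\in G$ with $u^{l}=1$ and, applying Corollary \ref{co:8} with $b$ a generator of $\mathbb{Z}$, take the elements $u^{(0)}, u^{(1)}, \dots , u^{(t)}\in D_{V}$ with $[u^{(k)},b]=u^{(k-1)}$ and $u^{(k)}(1)=u$, where $u^{(0)}$ is the constant function with value $u$. By the explicit integration formula (\ref{eq:4.3}), every component of every $u^{(k)}$ is a power of $u$, hence lies in gp($u$).

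The main step is to prove by induction on $k$ that $u^{(k)}$ is $l^{k}$-periodic. For $k=0$ this is clear, since $u^{(0)}$ is constant. Assuming $u^{(k-1)}$ has period $r=l^{k-1}$, note that $u^{(k)}$ satisfies $[u^{(k)},b]=u^{(k-1)}$, has all its components in gp($u$), and satisfies $u^{(k)}(1)=u=u^{(k-1)}(1)$; hence Lemma \ref{le:11}, applied with $d=u^{(k-1)}$, $x=u^{(k)}$, $i_{0}=1$, shows that $u^{(k)}$ has period $lr=l^{k}$. The case $k=1$, where $u^{(k-1)}=u^{(0)}$ is constant rather than ``non-constant periodic'' as literally required in Lemma \ref{le:11}, is handled either by the direct computation $u^{(1)}(b^{i})=u^{i+1}$, which is $l$-periodic, or by observing that the proof of Lemma \ref{le:11} uses only that the product of the values of $d$ over one period is a power of $u$, which is automatic here. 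In particular every $u^{(k)}$ with $0\le k\le t$ is $l^{t}$-periodic, because $l^{k}$ divides $l^{t}$.

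Finally I would pass to $V_{l^{t}}$: each $u^{(k)}$ is $l^{t}$-periodic and so determines an element of $D_{V_{l^{t}}}$ with $u^{(k)}(1)=u$, and by Corollary \ref{co:12} the identities $[u^{(k)},b]=u^{(k-1)}$ survive in $V_{l^{t}}$, which is (\ref{eq:5.1}). Then (\ref{eq:5.2}) is purely formal: iterating (\ref{eq:5.1}) gives $[u^{(k)},b;j]=u^{(k-j)}$ for $0\le j\le k$, so $[u^{(k)},b;k]=u^{(0)}$, and since $u^{(0)}$ is constant it commutes with $b$, whence $[u^{(i)},b;k]=[u^{(0)},b;k-i]=1$ for every $k>i$. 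The only point requiring real care is the inductive periodicity claim — specifically the separate treatment of the base case $k=1$ and keeping track that the period fed into Lemma \ref{le:11} at stage $k$ is exactly $l^{k-1}$; everything else is bookkeeping.
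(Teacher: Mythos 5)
Your proposal is correct and follows essentially the same route as the paper: build $u^{(0)},\dots,u^{(t)}$ in $G\ Wr\ \mathbb{Z}$ via Corollary \ref{co:8}, show $l^{k}$-periodicity of $u^{(k)}$ inductively from Lemma \ref{le:11}, and descend to $D_{V_{l^{t}}}$ via Corollary \ref{co:12}. Your explicit treatment of the base case $k=1$ (where the paper just says $u^{(1)}$ is ``obviously'' $l$-periodic) and your bookkeeping of the exact periods are welcome refinements, not a different argument.
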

 
\begin{lemma}
\label{le:14}
Let $u_1, ..., u_m \in G$ be  elements of  finite orders $l_1, ..., l_m,$ respectively. Let 
$s = $  {\rm lcm(}$l_1^{m}, ..., l_m^{m}${\rm )}. Let
$V_{s} = G\ wr\ \mathbb{Z}_{s}$ be the wreath product of $G$ with the cyclic group $\mathbb{Z}_{s} =$  {\rm gp(}$b_s{\rm )}$   of order $s$. By $D_{V_s}$ we denote the base subgroup of $V_{s}$. Then each of the elements $u_i^{(i)}, i = 1, ..., m$ that can be constructed by   Corollary  \ref{co:13}  can be considered as element of $D_{V_s}$. These elements have the following properties: 
 \begin{equation}
 \label{eq:5.3} [u_i^{(k)}, b_s] = u_i^{(k-1)}, [u_i^{(k)}, b_s; k] = u_i^{(0)}
 \ {\rm for}\ k = 1, ..., m
 \end{equation}
\noindent
and
\begin{equation}
\label{eq:5.4}[u^{(i)}, b_s; k] =1\ {\rm for}\  i < k.
\end{equation} 
\end{lemma}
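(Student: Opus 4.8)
\textbf{Proof proposal for Lemma \ref{le:14}.}

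The plan is to reduce everything to the periodicity bookkeeping already established in Lemma \ref{le:11} and Corollary \ref{co:13}, and then simply observe that $s$ is a common multiple of all the individual periods that arise. First I would fix $i \in \{1,\dots,m\}$ and look at the chain $u_i^{(0)}, u_i^{(1)}, \dots, u_i^{(m)}$ produced by Corollary \ref{co:13}. The element $u_i^{(0)}$ is the constant function with value $u_i$, hence $l_i$-periodic (in fact $1$-periodic, but we only need a bound). Applying Lemma \ref{le:11} once shows $u_i^{(1)}$ is $l_i$-periodic; applying it again (now with $d = u_i^{(1)}$, whose period is $l_i$) shows $u_i^{(2)}$ is $l_i^2$-periodic; inductively $u_i^{(k)}$ is $l_i^{k}$-periodic, so in particular $u_i^{(i)}$ is $l_i^{i}$-periodic and therefore $l_i^{m}$-periodic. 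Since $s = \mathrm{lcm}(l_1^{m},\dots,l_m^{m})$ is divisible by $l_i^{m}$, the function $u_i^{(i)}$ — and likewise every $u_i^{(k)}$ for $k \le i \le m$ — is $s$-periodic, hence descends to a well-defined element of $D_{V_s}$, the base group of $V_s = G\ wr\ \mathbb{Z}_s$.

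Next I would transfer the commutator identities. By Corollary \ref{co:12} (the finite analogue of Lemma \ref{le:7}), the relation $[u_i^{(k)}, b] = u_i^{(k-1)}$ which holds in $D_V$ remains valid in $D_{V_s}$ once both sides are $s$-periodic, because the shift action of $b_s$ on $D_{V_s}$ is the reduction mod $s$ of the shift action of $b$ on $D_V$, and the defining equations \eqref{eq:4.2} are preserved under that reduction. Iterating gives $[u_i^{(k)}, b_s; k] = u_i^{(0)}$, which is \eqref{eq:5.3}. For \eqref{eq:5.4}, the same iteration applied to $u^{(i)}$ with $i < k$ yields $[u^{(i)}, b_s; k] = [u^{(0)}, b_s; k-i]$; since $u^{(0)}$ is a constant function it satisfies $[u^{(0)}, b_s] = 1$, so one further commutator with $b_s$ kills it and the whole expression is trivial — exactly as in \eqref{eq:4.5} of Corollary \ref{co:8}, now in the finite setting.

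The only genuine point requiring care — and the place I expect to spend the most attention — is verifying that reducing a bi-infinite periodic function to a function on $\mathbb{Z}_s$ is compatible with the commutator/shift operations, i.e. that the discrete-integration formulas \eqref{eq:4.3} really do respect passage to the quotient. This is essentially the content of Lemma \ref{le:11} together with Corollary \ref{co:12}: the proof of Lemma \ref{le:11} shows that the product of the values of $d$ over one full block of $l$ periods is trivial (because $u$ has order $l$ and $d$ takes values in $\mathrm{gp}(u)$), which is precisely what forces $x = I_1(d,u^{i_0})$ to be periodic with period $l r$ rather than merely quasi-periodic. Once that is in hand, the lemma follows by pure arithmetic: take the common multiple $s$, note $l_i^{m}\mid s$ for every $i$, and everything goes through.
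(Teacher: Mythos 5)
Your argument is correct and is essentially the paper's intended reasoning: the paper itself gives no separate proof of Lemma \ref{le:14}, treating it as immediate from the periodicity discussion after Corollary \ref{co:12} (each $u_i^{(k)}$ is $l_i^{k}$-periodic, hence $s$-periodic since $l_i^{m}\mid s$) together with Corollary \ref{co:13}, which is exactly the chain you spell out. Your only cosmetic deviation is invoking Lemma \ref{le:11} for the constant function $u_i^{(0)}$ (formally excluded by its ``non-constant'' hypothesis), where the paper just observes directly that $u_i^{(1)}$ is $l_i$-periodic; this is harmless.
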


The following lemma is an analogue of Lemma \ref{le:9}. 

\begin{lemma}
\label{le:15}
Let $u_1, ..., u_m \in G$ be  elements of  finite orders $l_1, ..., l_m,$ respectively. Let 
$s = $  {\rm lcm(}$l_1^{m}, ..., l_m^{m}${\rm )}. Let
$W_{s} = G\ wr\ \mathbb{Z}_{s}^2$ be the wreath product of $G$ with the direct product   {\rm gp(}$b_{1,s}${\rm )}$ \times $ {\rm gp(}$b_{2,s}${\rm )}   of two cyclic groups of order $s$ each.  Let $u_i^{(0)}\in D_{W_s}$  be a constant function with the value $u_i$.  Then there exist  functions $f_i\in D_{W_s},$ all of whose values  belong to {\rm gp(}$u_i${\rm )}, which satisfy the following properties. 
\begin{equation}
\label{eq:5.5}
[f_i, b_{2,s}; i; b_{1,s}; t - i] = u_i^{(0)}, [f_j, b_{2,s}; i; b_{1,s}; t - i] = 1 \  {\rm for} \ i\not=j, \ i, j \in \{1, ..., m\}.
\end{equation}
\end{lemma}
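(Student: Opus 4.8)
The plan is to mimic the proof of Lemma \ref{le:9} verbatim, replacing the infinite cyclic groups gp($b_1$), gp($b_2$) by the finite cyclic groups gp($b_{1,s}$), gp($b_{2,s}$) of order $s$, and replacing each discrete integral $I_k(\cdot,\cdot)$ used there by its finite (periodic) analogue provided by Lemma \ref{le:14}. The key point that makes this substitution legitimate is the choice $s = \operatorname{lcm}(l_1^{m},\dots,l_m^{m})$: for each $i$, the element $u_i$ has order $l_i$, and in Lemma \ref{le:9} one forms, for each column, at most $i\le m$ successive integrals of an $l_i$-periodic function, which by Lemma \ref{le:11} (iterated) stays periodic with a period dividing $l_i^{m}\mid s$; likewise one forms at most $t-i\le t$ integrals along rows, and here I need $s$ to also absorb those, so I will record at the outset that $t\le m$ (the statement of Theorem \ref{th:5} has $G$ generated by $u_1,\dots,u_m$, so in the application $t\le m$) — or, to be safe, simply note that every function occurring in the construction takes values in some gp($u_i$) and, being built by at most $m$ iterated integrations starting from a constant (hence $l_i$-periodic) function, is $l_i^{m}$-periodic, hence $s$-periodic, hence descends to $D_{W_s}$.

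Concretely, I would proceed as follows. First, fix $i\in\{1,\dots,m\}$ and define $f_i\in D_{W_s}$ exactly as in Lemma \ref{le:9}: take the $0$-row to be $R(0)(u_i^{(t-i)})$ — the $(t-i)$-th iterated integral (in the $b_{1,s}$-direction) of the constant row with value $u_i$, which by Lemma \ref{le:14} is a well-defined $s$-periodic element — and then replace each column by its $i$-th iterated integral in the $b_{2,s}$-direction, again $s$-periodic by Lemma \ref{le:14}. All values of $f_i$ lie in gp($u_i$) by construction. Second, compute $[f_i, b_{2,s}; i]$: by Corollary \ref{co:13} (the finite analogue of Corollary \ref{co:8}), taking the $i$-th commutator with $b_{2,s}$ differentiates each column $i$ times, returning each column to the constant function with the value of its original $0$-row entry; so $[f_i,b_{2,s};i]$ has $j$-th column constant equal to $u_i^{\alpha_{0,j}}$, i.e.\ it is the "row $R(0)(u_i^{(t-i)})$ spread out constantly in the $b_{2,s}$-direction". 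Third, apply $[\,\cdot\,, b_{1,s}; t-i]$: this differentiates the $0$-row $t-i$ times, and since that row was itself the $(t-i)$-th integral of the constant row $u_i^{(0)}$, we land on $u_i^{(0)}$, giving the first equality in (\ref{eq:5.5}).

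For the vanishing statement $[f_j, b_{2,s}; i; b_{1,s}; t-i] = 1$ when $i\ne j$, I would reproduce the two cases of Lemma \ref{le:9} verbatim. If $j<i$ (more precisely, if the number of $b_{2,s}$-differentiations $i$ exceeds the column-integration order $j$ of $f_j$), then $[f_j, b_{2,s}; i]=1$ already by the last clause of Corollary \ref{co:13} ($[u^{(p)},b;k]=1$ for $p<k$), so the whole expression is trivial. If $j>i$, then after $i$ differentiations in the $b_{2,s}$-direction we are left with a function whose $0$-row is $R(0)(u_j^{(t-j)})$ (unchanged by column-differentiation) spread constantly, and we must check $[\,\cdot\,, b_{1,s}; t-i]=1$; since $t-i > t-j$ and $R(0)(u_j^{(t-j)})$ is a $(t-j)$-fold integral of a constant row, $[R(0)(u_j^{(t-j)}), b_{1,s}; t-j+1]=1$, hence a fortiori $[\,\cdot\,, b_{1,s}; t-i]=1$. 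The induction-on-$s$ bookkeeping argument at the end of the proof of Lemma \ref{le:9} — that differentiating columns does not disturb the vanishing of the $0$-row, propagated to all rows via the $\mathbb{Z}$-linear relations $L(\alpha_{i,j},\dots,\alpha_{i+r+1,j})$ — carries over unchanged, since those relations are identities in the exponent lattice and are insensitive to whether the indices run over $\mathbb{Z}$ or over $\mathbb{Z}/s\mathbb{Z}$.

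The main obstacle, and the only genuine difference from Lemma \ref{le:9}, is verifying that all the intermediate functions really are $s$-periodic so that the passage to $D_{W_s}$ is legitimate — i.e.\ that taking commutators with $b_{1,s}$ resp.\ $b_{2,s}$ in $W_s = G\, wr\, \mathbb{Z}_s^2$ computes the \emph{same} thing as taking commutators with $b_1$ resp.\ $b_2$ in $W = G\, Wr\, \mathbb{Z}^2$ and then reducing mod $s$. This is exactly the content of Lemma \ref{le:14} (and Corollary \ref{co:12}/\ref{co:13}) applied to each of the at most $m$ nested integration steps, using $l_i^{m}\mid s$; so once those are invoked the calculation is literally the calculation of Lemma \ref{le:9}. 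I would therefore present the proof as: "the functions $f_i$ and all commutators appearing in (\ref{eq:5.5}) are $s$-periodic by Lemma \ref{le:14}, hence lie in $D_{W_s}$, and the identities (\ref{eq:5.5}) now follow by the same computation as in the proof of Lemma \ref{le:9}."
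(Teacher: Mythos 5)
Your proposal is correct and follows essentially the same route as the paper, which simply states that the proof repeats that of Lemma \ref{le:9} taking into account Lemma \ref{le:14}; your additional care about $s$-periodicity (via Lemma \ref{le:14} and Corollary \ref{co:13}) making the construction descend to $D_{W_s}$ is exactly the intended point. You also sensibly handle the paper's $t$ versus $m$ notational slip, which does not affect the argument.
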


The proof completely repeats the proof of  Lemma \ref{le:9}, taking into account  Lemma \ref{le:14}.

We proceed directly to the proof of Theorem \ref{th:5}. We keep the notation introduced above.

Now $G$  is a  group generated by a finite set $u_1, ..., u_m$ of elements of finite orders $l_1, ..., l_m$, respectively.  We can assume that $m\geq 2.$ By Lemma \ref{le:15},  we construct the wreath product $W_{s} = G\ wr\ \mathbb{Z}_{s}^2$ and elements $f_i\in D_{V_s}, i = 1, ..., m,$ which satisfy the equalities (\ref{eq:5.5}).   
 
Let $\tilde{W}_s = G\ wr \ (\mathbb{Z}_{s^2} \times \mathbb{Z}_s^2),$ where $\mathbb{Z}_{s^2} = $ gp($c_s$) and $\mathbb{Z}_s^2 =$ gp($b_{1s}$) $\times $ gp($b_{2s}$). Let $s_1 < s_2 < ... < s_m $ be a strictly  uneven
sparse sequence of positive integers such that $s_{i+j}-s_j=s_{k+l}-s_l$ if and only if $i=k$ and $j=l.$ For definiteness, we  take  $s_{i}= 2^i$ for $i= 1, ... m.$ Since $s \geq 2^{m}$ this property is valid modulo $s^2\geq 2^{m+1}$.   

The rest of the proof completely repeats the arguments of the proof of Theorem \ref{th:3}.
 
Theorem is proved. 

\paragraph{Proof of Corollary \ref{co:6}.}

Now we can take in the proof of Theorem \ref{th:5}, instead of the active group  $\mathbb{Z}_{s^2} \times \mathbb{Z}_s^2$ in $\tilde{W}_s,$  the group $\mathbb{Z}_{e^{m+1}}\times \mathbb{Z}_e^2,$ and get the statement 1). The statement 2) follows directly from 1).

Corollary is proved.

Additional information

\bigskip
{\bf Vitaly Roman'kov}

Affiliation: Sobolev Institute of Mathematics, Siberian Branch of the Russian Academy of Sciences,
Omsk Division, Pevtsova street 13, 644099, Omsk, Russia.

Dostoevsky Omsk State University, Mira 55-a, 644077, Omsk, Russia.

E-mail: romankov48@mail.ru

\end{document}